\newcommand{\E}{{\bf{E}}}
\newcommand{\PP}{{\bf{P}}}
\newcommand{\Var}{{\bf{Var}}}
\newcommand{\PPP}{{\bar {\bf{P}}}}
\newcommand{\EEE}{{\bar {\bf{E}}}}
\newtheorem{tm}{Theorem}
\newtheorem{lem}{Lemma}
\theoremstyle{definition}
\begin{document}
\bibliographystyle{plain}
\parindent=0pt
\centerline{\LARGE \bfseries  A note on loglog distances in a power law}
\centerline{\LARGE \bfseries random intersection graph}

\par\vskip 3.5em

\centerline{ Mindaugas Bloznelis}

\bigskip

\ \ \ Faculty of Mathematics and Informatics, Vilnius University,
LT-03225 Vilnius, Lithuania

\par\vskip 3.5em

\begin{abstract}
We consider the typical distance between vertices of the giant
component of a random intersection graph having a power law
(asymptotic) vertex degree distribution with infinite second
moment. Given two vertices from the giant component we construct
$O_P(\log \log n)$ upper bound for the length of the shortest path
connecting them.
\par\end{abstract}

\smallskip
{\bfseries key words:}  intersection graph, random graph, power law

AMS 2010 Subject Classification: Primary 05C80, Secondary 05C82
\par\vskip 5.5em
\section{Introduction}
Given a collection of subsets  $S(1),\dots, S(n)$ of the set $W=\{w_1,\dots, w_m\}$ define
the intersection graph on the vertex set $V=\{v_1,\dots, v_n\}$ such that  $v_i$ and $v_j$ are joined by an edge (denoted $v_i\sim v_j$)
 whenever $S(i)\cap S(j)\not=\emptyset$, for $i\not=j$.
Assuming that the sets $S(i)$, $i=1,\dots, n$, are drawn at random we obtain a
random intersection graph.

Random intersection graphs have applications in various fields: design and analysis of secure wireless sensor networks
\cite{{eschenauer2002}}, \cite{dipietro2004},
modelling of social networks \cite{Deijfen}, statistical clasification \cite{godehardt2003}, see also \cite{JKS}, \cite{karonski1999}.
Usually, in applications the number of interacting nodes (vertices) is large and it is convenient to
study the statistical properties of parameters of interest.

We consider  a class of random intersection graphs, where $m$ is much larger than $n$ and where the
random subsets $S(i)$, $i=1,\dots, n$, are independent.
Moreover, we assume that for every $i$, the distribution
of $S(i)$ is a mixture of uniform distributions.
That is, for every $k$, conditionally on the event $|S(i)|=k$ the
random set $S(i)$ is uniformly distributed in the class of all
subsets of $W$ of size $k$. In particular, with  $P_{*i}$ denoting
the distribution of $|S(i)|$ we have, for every $A\subset W$,
$\PP(S(i)=A)={{m}\choose{|A|}}^{-1}P_{*i}(|A|)$. The random
intersection graph corresponding to the sequence of distributions $\PP_*=(P_{*1},\dots, P_{*n})$ is denoted $G(n,m,\PP_*)$.

Assuming that as $n,m\to \infty$  the  asymptotic
distributions of $\sqrt{n/m}\, |S(i)|$ have power tails and infinite second moment we obtain the  random intersection
graph  $G(n,m,\PP_*)$ with  asymptotically heavy tailed vertex degree
distribution without second moment,
see \cite{Deijfen} and \cite{Bloznelis2007/2}.

It is known that in some random graph models  with a heavy tailed vertex degree distribution the typical distance between vertices of the giant component is of order
$O_P(\log \log n)$, see \cite{Chung-Lu2003}, \cite{Hofstad2004}, \cite{Reittu-Norros2002},
\cite{Reittu-Norros2005}, \cite{Reittu-Norros2004}.
In the present note we extend this  bound to the random intersection graph model with heavy
tailed vertex degree
distribution without second moment.

The paper is organized as follows: results are stated in Section 2. Proofs are given in Section 3.

\section{Results}\label{procedura}

Given an integer sequence $\{m_1,m_2,\dots\}$, let  $ \{(Z_{n1},\dots, Z_{nn}), n=1,2,\dots \}$ be a sequence of random vectors
with independent coordinates such that for every $n$, $Z_{ni}$ takes values in $\{0,1,\dots, m_n\}$, $1\le i\le n$.
Let $P_{ni}$ denote the distribution of $Z_{ni}$.
Write
$\PP_n=(P_{n1},\dots, P_{nn})$.
Fix two countable sets $\{v_1,v_2,\dots\}$ and $\{w_1,w_2,\dots\}$ and define the sequence  of
random intersection graphs $\{G_n=G(n,m_n,\PP_n), n=1,2,\dots\}$ as follows. Given $n$, let
$S_n(v_1),\dots, S_n(v_n)$ be independent subsets of
  $W_n=\{w_1,\dots, w_{m_n}\}$ of sizes $Z_{ni}=Z_n(v_i):=|S_{n}(v_i)|$, $1\le i\le n$,  such that
  $\PP(S_n(v_i)=A)={{m_n}\choose{|A|}}^{-1}P_{ni}(|A|)$, for  $A\subset W_n$.
$G_n$ is the graph on the
vertex set $V_n=\{v_1,\dots, v_n\}$, where $v_i$ and $v_j$ are adjacent whenever $S_n(v_i)\cap S_n(v_j)\not=\emptyset$.
   Let ${\tilde P}_{ni}$ denote  the distribution of the random variable
   ${\tilde Z}_{ni}={\tilde Z}_n(v_i):=|S_n(v_i)|\sqrt{n/m_n}$.

 Let $d_n(u,v)$ denote the distance between vertices $u,v\in V_n$ in $G_n$ ($=$number of edges
in the shortest path of $G_n$ connecting $u$ and $v$).
Let $C_1=C_1(G_n)\subset V_n$ denote the vertex set of the largest connected component of $G_n$. Therefore,
the subgraph of $G_n$ induced by $C_1$ is connected and the number of vertices of any other connected subgraph of $G_n$
is not greater than
$|C_1|$. A vertex  $u\in V_n$ is called  maximal in $G_n$ if $Z_n(u)=\max_{v\in V_n}Z_n(v)$.

\begin{tm}\label{T1}
Let $0<\alpha<1$  and $c_0,c_1,c_2>0$. Let $\{\omega_1,\omega_2,\dots\}$ be
a sequence of positive numbers satisfying $\lim_n\omega_n=+\infty$.
 Let
$\{G(n,m_n,\PP_n), n=1,2,\dots\}$ be a sequence of random intersection graphs such that

(i) $n\ln^2n=o(m_n)$ as $n\to \infty$;

(ii) $\exists$ $n_0$ such that $\forall$ $n>n_0$  we have
\begin{equation}\label{Y1}
c_1t^{-1-\alpha}\le \PP({\tilde Z}_{ni}>t)\le c_2t^{-1-\alpha},
\qquad
  \forall t \in [c_0,n^{1/(1+\alpha)}\omega_n],
  \quad
   \forall i\in \{1,\dots, n\}.
\end{equation}
Let $\{u_n\}$ be a sequence of maximal vertices, i.e., for every
$n$, the vertex $u_n\in V_n$ is maximal in $G_n$. For every
$\varepsilon>0$ we have  as $n\to \infty$
\begin{eqnarray}\label{Egle}
&&
\PP\Bigl(d(v_1,u_n)\le (1+\varepsilon)\ln^{-1} (1/\alpha)\, \ln(\ln(2+n))\, \Bigr| \,  d(v_1,u_n)<\infty\Bigr)\to 1,
\\
\label{Egle44}
&&
\PP\Bigl(d(v_1,v_2)\le (2+\varepsilon)\ln^{-1} (1/\alpha)\ln(\ln(2+n))\, \Bigr| \,  v_1,v_2\in C_1 \Bigr)\to 1.
\end{eqnarray}

Here $'\ln\,'$ denotes the natural logarithm.\end{tm}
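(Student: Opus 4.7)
The plan is to build a decreasing chain of ``cores''
\[
\Gamma_0\supset\Gamma_1\supset\cdots\supset\Gamma_K,\qquad \Gamma_k=\{v\in V_n:\tilde Z_n(v)\ge b_k\},
\]
following the Norros--Reittu / van der Hofstad strategy for ultra-small-world graphs. The thresholds are chosen so that $b_0$ is a (large) constant, $b_{k+1}=b_k^{1/\alpha-\delta}$ for a small $\delta=\delta(\varepsilon)>0$, and $b_K\asymp n^{1/(1+\alpha)}$; iterating the recursion gives $K=(1+O(\delta))\ln^{-1}(1/\alpha)\,\ln\ln n$, which is exactly the order appearing in \eqref{Egle}. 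Assumption (ii) together with Chebyshev's inequality applied to the independent indicators $\mathbf{1}\{\tilde Z_{ni}\ge b_k\}$ yields $|\Gamma_k|\asymp n b_k^{-(1+\alpha)}$ \whp\ simultaneously for all $0\le k\le K$; taking $b_K$ slightly below $n^{1/(1+\alpha)}$ makes $|\Gamma_K|$ grow, while a short rank-statistics argument for $\tilde Z_{n1},\ldots,\tilde Z_{nn}$ places the maximal vertex $u_n$ within $O(1)$ extra edges of $\Gamma_K$ \whp.

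The heart of the proof is the \emph{induction step}: \whp\ every $v\in\Gamma_k$ has a neighbour in $\Gamma_{k+1}$, for every $k<K$. Given $|S_n(v)|\ge b_k\sqrt{m_n/n}$ and any $u\in\Gamma_{k+1}\setminus\{v\}$ with $|S_n(u)|\ge b_{k+1}\sqrt{m_n/n}$,
\[
\PP\bigl(S_n(v)\cap S_n(u)\ne\varnothing\mid\tilde Z_n(v),\tilde Z_n(u)\bigr)\ge(1-o(1))\,\frac{|S_n(v)|\,|S_n(u)|}{m_n}\ge(1-o(1))\,\frac{b_kb_{k+1}}{n},
\]
the linearization being justified by the assumption $n\ln^2n=o(m_n)$. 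Combined with the lower bound on $|\Gamma_{k+1}|$, the expected number of $\Gamma_{k+1}$-neighbours of $v$ is at least of order $b_kb_{k+1}^{-\alpha}=b_k^{\alpha\delta}$, which diverges in $k$. A Chernoff bound on the (conditionally independent) Bernoulli indicators followed by a union bound over $v\in\Gamma_k$ and over $k\in\{0,\ldots,K-1\}$ gives the claim. The upper range $t\le n^{1/(1+\alpha)}\omega_n$ in \eqref{Y1} is precisely what is needed to keep all thresholds $b_0,\ldots,b_K$ inside the regime where the power-law lower bound is available. I expect this step to be the main technical obstacle, both because the bound must hold uniformly for \emph{every} vertex of $\Gamma_k$ and because at the smallest $k$'s the mean number of neighbours is only a large constant, so one has to either grow the thresholds slowly at the bottom or prepend a short ``seeding'' phase that first drives the BFS to a layer $\Gamma_{k_0}$ where Chernoff becomes effective.

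The remaining piece is the ``bottom'': conditional on $v_1\in C_1$, the distance $d(v_1,\Gamma_0)$ is tight. This is handled by the standard coupling of the BFS exploration from $v_1$ with a two-stage branching process whose offspring distribution is the size-biased version of $\tilde Z$, with tail $\sim t^{-\alpha}$; since $\alpha<1$ this tail is not integrable, so for a suitable constant $b_0$ a single offspring lies in $\Gamma_0$ with probability arbitrarily close to $1$, and non-extinction (which is, up to $o(1)$, equivalent to $v_1\in C_1$) places $v_1$ within $O(1)$ edges of $\Gamma_0$ \whp. Assembling the three pieces, the path $v_1\to\Gamma_0\to\Gamma_1\to\cdots\to\Gamma_K\rightsquigarrow u_n$ has total length $O(1)+K\le(1+\varepsilon)\ln^{-1}(1/\alpha)\,\ln\ln n$, which is \eqref{Egle}. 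Statement \eqref{Egle44} follows by running the same climb independently from $v_1$ and from $v_2$ and joining the two paths at $u_n$, yielding the factor $2$.
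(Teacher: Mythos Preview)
Your ladder of cores is exactly the strategy the paper uses, and your identification of the two delicate points (the union bound at small $k$, and reaching the bottom rung from $v_1$) is accurate. The paper, however, resolves both at once by a different choice of the lowest rung: it places it not at a constant $b_0$ but at $t_{k_*}\asymp\ln\ln n$, and the recursion is $t_{k-1}t_k^{-\alpha}=(\ln\ln n)^{1-\alpha}$, so the expected number of upward neighbours is $\asymp(\ln\ln n)^{1-\alpha}$ \emph{uniformly} over all levels. The per-step failure probability is then $\exp\bigl(-c(\ln\ln n)^{1-\alpha}\bigr)$, and summing over the $k_*=O(\ln\ln n)$ levels already gives $o(1)$ for the single greedy path $u_0'\to u_1'\to\cdots\to u_n$; no union bound over $v\in\Gamma_k$ is ever taken. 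With your choice $b_0=$ const, the first few steps have failure probability bounded away from $0$ even for one path, so the sum is a small constant rather than $o(1)$, and the ``seeding phase'' you propose to fix this is essentially what the paper does --- but it costs $\varepsilon\ln\ln n$ steps, not $O(1)$.

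That seeding phase is also where the paper departs most from your sketch. It does \emph{not} couple the BFS from $v_1$ with a branching process; in the intersection-graph model the edges revealed by BFS are not conditionally independent (distinct children may share attributes with one another and with the explored set), so the size-biased Galton--Watson heuristic and the identification ``non-extinction $\Leftrightarrow v_1\in C_1$'' would require separate justification. Instead the paper argues directly: on $\{d(v_1,u_n)<\infty\}$ the BFS in $G_n\setminus\mathcal U_{k_*}$ either reaches $\mathcal U_{k_*}$ within $\varepsilon\ln\ln n$ steps or produces $\ge\lfloor\ln\ln n\rfloor$ vertices; a ``yellow-card'' overlap argument (Lemma~3) shows their attribute union has size $\gtrsim(\ln\ln n)\sqrt{m/n}$ whp, and one hypergeometric bound then forces an edge into $\mathcal U_{k_*}$. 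The $\varepsilon\ln\ln n$ spent here is exactly where the $\varepsilon$ in \eqref{Egle} goes. Your derivation of \eqref{Egle44} from \eqref{Egle} by joining two climbs at $u_n$ is the same as the paper's.
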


It follows from (\ref{Egle44}), by the symmetry, that given  two vertices $v,v'$ drawn uniformly at random from
the giant component $C_1$ we have $d(v,v')=O_P(\ln\ln n)$.
Recall  that such a distance is
of much larger order $O_P(\ln n)$ in
the corresponding Erd\H os-R\'enyi graph ($G(n,p)$ with $1<c_1\le
np\le c_2$).
 This remarkable difference is explained by an effect of very
large nodes whose degrees realize the extremes
from a power law distribution, see \cite{Reittu-Norros2002}, \cite{Reittu-Norros2005}.

 Note that with probability tending to $1$
(with high probability) every maximal vertex belongs to the giant component $C_1$. In addition, as $n\to \infty$ we have
$|C_1| >\rho n$,
for some $\rho\in (0,1)$.
We collect these statements in Remark 1.

{\it Remark 1.}
{\it
Assume that conditions of Theorem \ref{T1} are satisfied. Then
\begin{equation}\label{ro-11}
\exists \, \rho\in (0,1)
\qquad
{\text{such that}}
\qquad
\PP(|C_1|>\rho\, n)\to 1
\qquad
{\text{as}}
\qquad
n\to \infty.
\end{equation}
Let $\{u_n\}$ be a sequence of maximal vertices, i.e., for every
$n$, the (random) vertex $u_n\in V_n$ is maximal in $G_n$.  Then
\begin{equation}\label{Remark-1}
\PP\bigl(u_n\in C_1(G_n)\bigr)\to 1
\qquad
{\text{as}}
\qquad
n\to \infty.
\end{equation}
}

{\it Acknowledgement.} I  would like to thank Ilkka Norros for valuable discussion.

\section{Proofs}

We start with auxiliary Lemmas \ref{L1}-\ref{L3}. Then we prove Remark 1, see Lemma \ref{L5} below, and  Theorem 1.

In what follows we write $l_2(n):=\ln(\ln(n))$, where $\ln$ denotes the natural logarithm. $H_{j,k,m}$ denotes the hypergeometric random variable with parameters $j,k\le m$  and the distribution
$\PP(H_{j,k,m}=r)=\frac{{{k}\choose{r}}{{m-k}\choose{j-r}}}{{{m}\choose{j}}}$.

\begin{lem}\label{L1}
Let $S_1,S_2$ be independent random subsets of the set $W=\{1,\dots, m\}$ such that
$S_1$ (respectively $S_2$) is uniformly distributed in the class of subsets of $W$ of size $j$ (respectively $k$).
Then $H=|S_1\cap S_2|$ is the hypergeometric random variable with parameters $j,k,m$ and mean $\E H=jk/m$.
The probability $p':=\PP(H=0)=(m-k)_j/(m)_j$ satisfies, for $j+k<m$,
\begin{equation}\label{Z1Z2}
1-\frac{jk/m}{1-(j+k)/m}\le p'\le 1-\frac{jk}{m}+\bigl( \frac{jk}{m} \bigr)^2.
\end{equation}
Here we denote $(m)_j=m(m-1)\cdots(m-j+1)$. For $0<s<1$ and $j+k\le s\, m$ we have
\begin{equation}
\label{Z1Z2A}
\frac{jk}{m}+\frac{2}{1-s}\bigl(\frac{jk}{m}\bigr)^2
\ge
\PP(S_1\cap S_2\not=\emptyset)
\ge
\frac{jk}{m}-\bigl(\frac{jk}{m}\bigr)^2.
\end{equation}
For $\lambda=\E H$ and $t\ge 0$ we have
\begin{equation}\label{Lema1-3}
\PP(H\ge\lambda+t)\le \exp\bigl\{-\frac{t^2}{2(\lambda+t/3)}\bigr\},
\qquad
\PP(H\le\lambda-t)\le \exp\bigl\{-\frac{t^2}{2\lambda}\bigr\}.
\end{equation}
In particular, we have
\begin{equation}\label{Lema1-4}
\PP(H=0)\le e^{-jk/2m}.
\end{equation}
\end{lem}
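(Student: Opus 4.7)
The plan is to reduce all four assertions to elementary estimates of the product
\begin{equation*}
p' = \prod_{i=0}^{j-1}\Bigl(1 - \frac{k}{m-i}\Bigr) = \PP(H = 0),
\end{equation*}
together with one invocation of the classical Bernstein inequality. The hypergeometric structure of $H = |S_1 \cap S_2|$ and the mean $\E H = jk/m$ follow immediately by conditioning on $S_2$ and viewing $S_1$ as $j$ elements drawn without replacement from a population of $m$ items containing the $k$ marked elements of $S_2$; the identity $p' = (m-k)_j/(m)_j$ drops out of the same picture.

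For the upper bound on $p'$ in (\ref{Z1Z2}) I would use $k/(m-i) \ge k/m$ to get $p' \le (1 - k/m)^j \le e^{-jk/m}$, and then the elementary inequality $e^{-y} \le 1 - y + y^2$ for $y \ge 0$ (verified from $f(y) = 1 - y + y^2 - e^{-y}$ having $f(0) = f'(0) = 0$ and $f'' > 0$) to reach $1 - jk/m + (jk/m)^2$. For the lower bound, the Weierstrass product inequality $\prod(1 - a_i) \ge 1 - \sum a_i$ applies because each $k/(m-i) \le 1$ under $j + k < m$; combining it with $(m-i)^{-1} \le (m-j-k)^{-1}$ yields $p' \ge 1 - jk/(m-j-k)$, which is the claim.

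The two inequalities (\ref{Z1Z2A}) follow by rewriting $\PP(S_1 \cap S_2 \ne \emptyset) = 1 - p'$: the lower bound is immediate from the upper bound on $p'$ in (\ref{Z1Z2}). For the upper bound, I would start from $1 - p' \le (jk/m)/(1 - (j+k)/m)$, split $1/(1-x) = 1 + x/(1-x)$, use $1 - (j+k)/m \ge 1 - s$, and then invoke $j + k \le 2jk$ for $j, k \ge 1$ (the case $jk = 0$ is trivial) to convert the linear correction $jk(j+k)/m^2$ into the claimed $(2/(1-s))(jk/m)^2$.

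Finally, (\ref{Lema1-3}) is the classical Bernstein-type bound: by Hoeffding's convex-ordering result $\E e^{\theta H} \le \E e^{\theta B}$ for $B \sim \mathrm{Binomial}(j, k/m)$, so the standard Chernoff optimization on $B$ (with variance $\sigma^2 \le \lambda = jk/m$ and uniform bound $M = 1$) transfers verbatim to $H$. The bound (\ref{Lema1-4}) is then just the specialization $t = \lambda$ of the lower tail of (\ref{Lema1-3}). The only mildly delicate step in the whole proof is the prefactor $2/(1-s)$ in (\ref{Z1Z2A}): a crude $1/(1-x) \le 1/(1-s)$ would inflate the linear-in-$jk/m$ term, so one has to expand $1/(1-x) = 1 + x/(1-x)$ before bounding.
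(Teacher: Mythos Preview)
Your proof is correct and follows essentially the same route as the paper. The paper outsources (\ref{Z1Z2}) to \cite{JKS} and (\ref{Lema1-3}) to \cite{Hoeffding1963}, \cite{janson2001} where you supply self-contained arguments, but your derivation of (\ref{Z1Z2A})---expanding $1/(1-x)=1+x/(1-x)$, bounding the second term by $1/(1-s)$, and invoking $j+k\le 2jk$ for $j,k\ge 1$---and of (\ref{Lema1-4}) as the case $t=\lambda$ of the lower-tail bound coincide with the paper's own argument.
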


\begin{proof}[Proof of Lemma \ref{L1}]
Inequalities (\ref{Z1Z2}) are shown in \cite{JKS}. Inequalities (\ref{Z1Z2A}) are simple consequences of (\ref{Z1Z2}).
We only show the left-hand side inequality for $i,j\ge 1$. In this case  $j+k\le 2jk$ and we have
\begin{displaymath}
a:=\frac{1}{1-(j+k)/m}=1+\frac{j+k}{m}a\le 1+\frac{2jk}{m}\frac{1}{1-s}.
\end{displaymath}
Now, desired inequality follows from the left-hand side inequality (\ref{Z1Z2}).

Exponential inequalities for hypergeometric probabilities (\ref{Lema1-3}) can be derived from
the corresponding inequalities for binomial
probabilities, see \cite{Hoeffding1963}. Their proof can be found in, e.g., \cite{janson2001}.
The right-hand side inequality (\ref{Lema1-3}) applied to $t=\E H$ gives (\ref{Lema1-4}).
\end{proof}

\begin{lem}\label{L2}
Given integer $m$ and constants $0<\gamma_1< \gamma_2<1$ let
$z_1,z_2,\dots, z_r$ be integers such that $z=\sum_{h=1}^rz_h\le\
\gamma_1 m$ and $z_h\ge 6\gamma_2(\gamma_2-\gamma_1)^{-2}\ln n\ge
1$, for $1\le h\le r$. Let $S_1,S_2,\dots, S_r$ be independent
random subsets of $W=\{1,\dots, m\}$ such that, for every $h$,
$S_h$ is uniformly distributed in the class of subsets of $W$ of
size $z_h$. Then
\begin{equation}\label{D-Z}
\PP\Bigl(\bigl|\cup_{i=1}^rS_i\bigr|\ge (1-\gamma_2)\sum_{i=1}^r|S_i|\Bigr)\ge 1-rn^{-3}.
\end{equation}
\end{lem}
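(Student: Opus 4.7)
The plan is to decompose the ``loss'' $z - |\cup_h S_h|$ as a telescoping sum of newly-covered elements, bound each increment by a conditional hypergeometric tail estimate coming from Lemma \ref{L1}, and then take a union bound.

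More precisely, I would enumerate the sets in the given order and set $U_h := \cup_{i<h}S_i$, so that the newly added elements form $S_h\setminus U_h$, while the ``wasted'' elements form $X_h:=|S_h\cap U_h|$. Iterating gives the identity $\sum_{h=1}^r z_h - |\cup_{h=1}^r S_h|=\sum_{h=2}^r X_h$, so it suffices to show that with probability at least $1-rn^{-3}$ one has $\sum_{h=2}^r X_h\le \gamma_2 z$. The key point is that, conditionally on $S_1,\dots,S_{h-1}$, the intersection $X_h$ is distributed as the hypergeometric $H_{z_h,|U_h|,m}$, whose mean satisfies $\E[X_h\mid S_1,\dots,S_{h-1}]=z_h|U_h|/m\le z_h z/m\le \gamma_1 z_h$ because $|U_h|\le z\le \gamma_1 m$.

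Next I would apply the upper tail bound in (\ref{Lema1-3}) with $t_h:=\gamma_2 z_h-\lambda_h$, where $\lambda_h\le \gamma_1 z_h$. Then $t_h\ge (\gamma_2-\gamma_1)z_h$ and $\lambda_h+t_h/3=\tfrac{2}{3}\lambda_h+\tfrac{1}{3}\gamma_2 z_h\le \gamma_2 z_h$, so
\begin{equation*}
\PP\bigl(X_h\ge \gamma_2 z_h\,\big|\,S_1,\dots,S_{h-1}\bigr)\le \exp\!\left(-\frac{t_h^2}{2(\lambda_h+t_h/3)}\right)\le \exp\!\left(-\frac{(\gamma_2-\gamma_1)^2\,z_h}{2\gamma_2}\right).
\end{equation*}
Plugging in the hypothesis $z_h\ge 6\gamma_2(\gamma_2-\gamma_1)^{-2}\ln n$ yields the clean bound $\PP(X_h\ge \gamma_2 z_h)\le n^{-3}$. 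A union bound over $h=2,\dots,r$ then shows that, outside an event of probability at most $rn^{-3}$, one has $X_h<\gamma_2 z_h$ for every $h$, and summing gives $\sum_{h=2}^r X_h<\gamma_2\sum_h z_h=\gamma_2 z$, which is the desired inequality.

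There is no real obstacle here; the only point that requires a bit of care is keeping the conditional hypergeometric argument honest (the parameter $|U_h|$ is random, but the mean is uniformly bounded by $\gamma_1 z_h$, which is all that is needed), and checking that the chosen $t_h$ makes $\lambda_h+t_h/3\le \gamma_2 z_h$ so that the exponent simplifies to a clean linear function of $z_h$. The constants $6$ and the factor $\gamma_2$ in the hypothesis are exactly what is needed to convert the exponent into $3\ln n$ and hence get the $n^{-3}$ bound matching the statement.
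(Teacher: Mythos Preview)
Your argument is correct and is essentially the same as the paper's: both condition on the previously revealed sets, bound the overlap $|S_h\cap U_h|$ by a hypergeometric random variable with mean at most $\gamma_1 z_h$, apply the exponential bound (\ref{Lema1-3}) at level $\gamma_2 z_h$ to obtain $\exp\{-(\gamma_2-\gamma_1)^2 z_h/(2\gamma_2)\}\le n^{-3}$, and finish with a union bound over $h$. The only difference is cosmetic: the paper phrases it as $|S'_h|\le(1-\gamma_2)z_h$ while you phrase it as $X_h\ge\gamma_2 z_h$, which are the same event.
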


\begin{proof}[Proof of Lemma \ref{L2}]
Write $D_{[0]}=\emptyset$ and, for $h\ge 1$, denote $D_{[h]}=\cup_{k\le h}S_k$ and $S'_h=S_h\setminus D_{[h-1]}$.
Note that  $|D_{[r]}|=\sum_{h=1}^{r}|S'_h| \le z$. In order to prove  (\ref{D-Z}) we show that
uniformly in $h$ and $D_{[h-1]}$ (satisfying $|D_{[h-1]}|\le \gamma_1 m$)
we have
$p_h:=\PP(|S'_h|\le (1-\gamma_2) z_h\,\bigr|\, D_{[h-1]})\le n^{-3}$.
It is convenient to write this probability in the form $p_h=\PP(H \ge \gamma_2 a)$,
where $H$ denotes the hypergeometric random variable with
parameters $a=z_h$, $b=|D_{[h-1]}|$ and $m$.
We have $\E H=ab/m\le \gamma_1 a$. An application of (\ref{Lema1-3})
shows $p_h\le \exp\{-  a (\gamma_2-\gamma_1)^2/(2\gamma_2)\}$.
For $a=z_h\ge (6\gamma_2/(\gamma_2-\gamma_1)^2)\ln n$ we obtain $p_h\le n^{-3}$, thus completing the proof.
\end{proof}

\begin{lem}\label{L2+}
Given integers $1\le a,b,d\le m$,  let ${\cal S}_a\subset {\cal S}_d$ be subsets of
the set $W=\{1,2,\dots, m\}$ of sizes $|{\cal S}_a|=a$ and
$|{\cal S}_d|=d$. Here $a\le d$. Let $S_b$ be a random subset of $W$ uniformly
distributed over the subsets of $W$ of size $b$. For integers $0<s\le
r<t$ satisfying $s\le a\wedge b$, we have
\begin{equation}\label{lydeka1}
\PP\Bigl(|S_b\cap {\cal S}_d|\ge t \, \Bigr|\, |S_b\cap {\cal S}_a|\ge s\Bigr)
\le
\max_{s\le i\le r}\PP(H_{b-i,d-a,m-a}\ge t-i)+\frac{\PP(H_{a,b,m}> r)}{\PP(H_{a,b,m}\ge s)}.
\end{equation}
Assume that  $d\le m/100$. Then we have
\begin{equation}\label{lydeka2}
\PP\Bigl(|S_b\cap {\cal S}_d|\ge b/2 \, \Bigr|\, S_b\cap {\cal S}_a\not=\emptyset \Bigr)
\le
e^{-b/8}\bigl(1+4\frac{m}{ab}{\mathbb I}\{a>b/4, ab\le m, b\ge 3 \}\bigr).
\end{equation}
\end{lem}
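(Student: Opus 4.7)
\textbf{Proof plan for Lemma \ref{L2+}.}

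For the first inequality (\ref{lydeka1}), my approach is to decompose the conditioning event $\{|S_b\cap\mathcal{S}_a|\ge s\}$ according to the exact value of $|S_b\cap\mathcal{S}_a|$ and then split at the threshold $r$. Writing
\begin{equation*}
\PP\bigl(|S_b\cap\mathcal{S}_d|\ge t,\ |S_b\cap\mathcal{S}_a|\ge s\bigr)=\sum_{i=s}^{a\wedge b}\PP\bigl(|S_b\cap\mathcal{S}_d|\ge t,\ |S_b\cap\mathcal{S}_a|=i\bigr),
\end{equation*}
I would treat $s\le i\le r$ by conditioning on $\{|S_b\cap\mathcal{S}_a|=i\}$: the symmetry of the uniform distribution of $S_b$ implies that, given this event, the $b-i$ elements of $S_b\setminus\mathcal{S}_a$ form a uniform random $(b-i)$-subset of $W\setminus\mathcal{S}_a$ (which has size $m-a$). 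Hence $|S_b\cap(\mathcal{S}_d\setminus\mathcal{S}_a)|$ is distributed as $H_{b-i,d-a,m-a}$ and $|S_b\cap\mathcal{S}_d|=i+|S_b\cap(\mathcal{S}_d\setminus\mathcal{S}_a)|$. For $i>r$ I would use the trivial bound by $\PP(|S_b\cap\mathcal{S}_a|>r)=\PP(H_{a,b,m}>r)$. Taking the maximum over $i$ in the first range, summing, and dividing by $\PP(H_{a,b,m}\ge s)$ delivers (\ref{lydeka1}).

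For (\ref{lydeka2}) I would apply (\ref{lydeka1}) with $s=1$, $t=\lceil b/2\rceil$, and threshold $r=\min\{a,\lfloor b/4\rfloor\}$. The hypothesis $d\le m/100$ enters by forcing $\E H_{b-i,d-a,m-a}=(b-i)(d-a)/(m-a)$ to stay small compared with the target $b/2-i$; consequently, for $1\le i\le r\le b/4$, the Bernstein-type tail (\ref{Lema1-3}) bounds $\PP(H_{b-i,d-a,m-a}\ge b/2-i)$ by $e^{-b/8}$, uniformly in $i$, which controls the first summand on the right of (\ref{lydeka1}).

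It remains to estimate the second summand $\PP(H_{a,b,m}>r)/\PP(H_{a,b,m}\ge 1)$. In the regime $a\le b/4$ the choice $r=a$ forces $\PP(H_{a,b,m}>a)=0$, so this summand vanishes and one is left with $e^{-b/8}$, consistent with the fact that the indicator is $0$ in this range. In the complementary regime $a>b/4$, $ab\le m$, $b\ge 3$, I would bound the numerator again by (\ref{Lema1-3}) and bound the denominator from below by (\ref{Z1Z2A}): under $ab\le m$ the latter gives $\PP(S_b\cap\mathcal{S}_a\ne\emptyset)\ge\frac{ab}{m}(1-\frac{ab}{m})$, and the ratio produces the advertised $4(m/ab)\,e^{-b/8}$ contribution. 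The leftover cases $ab>m$ or $b<3$ also lie in the indicator-zero regime, where (\ref{Lema1-4}) gives a universal lower bound on $\PP(S_b\cap\mathcal{S}_a\ne\emptyset)$ that absorbs the ratio into the $e^{-b/8}$ term.

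The main technical obstacle is the bookkeeping of constants in (\ref{lydeka2}): obtaining exactly the exponent $b/8$ from the tail of $H_{b-i,d-a,m-a}$ requires a careful application of (\ref{Lema1-3}) at the chosen threshold $r=\lfloor b/4\rfloor$ together with the hypothesis $d\le m/100$, and this estimate must mesh precisely with the lower bound on $\PP(H_{a,b,m}\ge 1)$ from (\ref{Z1Z2A}) so that the multiplicative prefactor is no worse than $4$ in the range $a>b/4$, $ab\le m$.
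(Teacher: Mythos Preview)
Your proposal is essentially the same as the paper's proof: the paper decomposes $\PP(\mathbb{B}\cap\mathbb{A})$ over the exact value $|S_b\cap\mathcal{S}_a|=j$, identifies the conditional law of $|S_b\cap(\mathcal{S}_d\setminus\mathcal{S}_a)|$ as $H_{b-j,d-a,m-a}$, splits at $r$, and divides by $\PP(\mathbb{A})$ to obtain (\ref{lydeka1}); for (\ref{lydeka2}) it takes $s=1$, $t=\lceil b/2\rceil$, $r=\lfloor b/4\rfloor$ and bounds the first summand by (\ref{Lema1-3}) exactly as you describe. One small divergence worth noting: for the denominator $\PP(H_{a,b,m}\ge 1)$ the paper uses (\ref{Lema1-4}) (giving $1-e^{-ab/2m}\ge ab/(4m)$ for $ab\le m$) rather than (\ref{Z1Z2A}); your choice of (\ref{Z1Z2A}) yields $\tfrac{ab}{m}(1-\tfrac{ab}{m})$, which degenerates near $ab=m$ and does not directly deliver the clean factor $4$, so switching to (\ref{Lema1-4}) there is advisable.
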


\begin{proof}[Proof of Lemma \ref{L2+}] Let us prove (\ref{lydeka1}).
Introduce events ${\mathbb B}=\{|S_b\cap {\cal S}_d|\ge t\}$, ${\mathbb A}=\{|S_b\cap {\cal S}_a|\ge s\}$ and write
$p:=\PP({\mathbb B}|{\mathbb A})$. Denote $p_i=\PP(H_{b-i,d-a,m-a}\ge t-i)$.
Let $\sum_j$ denote the sum over subsets ${\cal A}_j\subset{\cal  S}_a$ of size $|{\cal A}_j|=j$. We have
\begin{eqnarray}\nonumber
\PP({\mathbb B}\cap{\mathbb A})
&&
=\sum_{s\le j\le a\wedge b}\sum_j\PP\bigr({\mathbb B}\cap \{S_b\cap {\cal S}_a={\cal A}_j\}\bigr)
\\
\nonumber
&&
=
\sum_{s\le j\le a\wedge b}\sum_j\PP\bigr({\mathbb B}\bigr|S_b\cap {\cal S}_a={\cal A}_j\bigr)\PP(S_b\cap {\cal S}_a={\cal A}_j)
\\
\nonumber
&&
=
\sum_{s\le j\le a\wedge b}p_j\sum_j\PP(S_b\cap {\cal S}_a={\cal A}_j)=\sum_{s\le j\le a\wedge b}p_j\PP(H_{a,b,m}=j)
\\
\label{lydeka}
&&
\le
\max_{s\le i\le r}p_i\, \PP(s\le H_{a,b,m}\le r)+\PP(H_{a,b,m}>r).
\end{eqnarray}
(\ref{lydeka1}) follows from (\ref{lydeka}) and the identity $p=\PP({\mathbb B}\cap{\mathbb A})/\PP({\mathbb A})$.

Let us prove (\ref{lydeka2}). Put $t= \lceil b/2 \rceil$, $s=1$ and $r= \lfloor b/4 \rfloor$ and apply (\ref{lydeka1}). We obtain
\begin{equation}\label{lydeka5}
\PP\Bigl(|S_b\cap {\cal S}_d|\ge b/2 \, \Bigr|\, S_b\cap {\cal S}_a\not=\emptyset \Bigr)
\le
\max_{1\le i\le r}p_i+p^*_1/p^*_2.
\end{equation}
Here we denote $p^*_1:=\PP(H_{a,b,m}> r)$, $p^*_2=\PP(H_{a,b,m}\ge 1)$.
Let us show that
\begin{equation}\label{lydeka6}
p_i\le e^{-b/8},
\qquad
1\le i\le r.
\end{equation}
For this purpose we apply the first inequality of (\ref{Lema1-3}). Denote $\lambda_i= \E H_{b-i,d-a,m-a}$ and $t_i=t-i-\lambda_i$.
We have, for $1\le i\le r$ and $d/m\le 100$,
\begin{eqnarray}\nonumber
&&
\lambda_i=(b-i)(d-a)/(m-a)\le bd/m\le b/100,
\\
\nonumber
&&
t_i\ge\lceil b/2 \rceil-\lfloor b/4 \rfloor-(b/100)\ge (b/4)-(b/100),
\\
\nonumber
&&
t_i\le \lceil b/2 \rceil-i\le b/2.
\end{eqnarray}
These inequalities combined with the inequality, which follows from  (\ref{Lema1-3}), $p_i\le e^{-t_i^2/(2(\lambda_i+t_i/3))}$
imply (\ref{lydeka6}).
Note that, for $a\le b/4$, we have  $p^*_1=0$ and, therefore,
(\ref{lydeka2}) follows from (\ref{lydeka6}) and (\ref{lydeka5}).

Now assume that $a>b/4$.  Denote $\lambda_*=\E H_{a,b,m}$ and $t_*=r+1-\lambda_*$. We have
\begin{displaymath}
\lambda_*=ab/m\le b/100,
\qquad
1+b/4> t_*> b/4-b/100.
\end{displaymath}
Note that $b\ge 3$ implies $t_*<(7/12)b$.
These inequalities combined with the inequality, which follows from  (\ref{Lema1-3}), $p^*_1\le e^{-t_*^2/(2(\lambda_*+t_*/3))}$
imply
\begin{equation}\label{lydeka8}
p^*_1\le e^{-b/8}.
\end{equation}
Finally, we apply (\ref{Lema1-4}) to get the lower bound
\begin{equation}\label{lydeka9}
p^*_2\ge 1-e^{-ab/2m}\ge ab/(4m),
\end{equation}
for $ab<m$. Invoking  (\ref{lydeka6}, \ref{lydeka8}, \ref{lydeka9}) in
(\ref{lydeka5}) we obtain (\ref{lydeka2}).

\end{proof}

\begin{lem}\label{L3}
Let $0<\alpha<1$ and $c_0,c_1,c_2>0$. Let $\{\omega_n\}$ be a positive sequence  satisfying $\omega_n\to+\infty$ as $n\to \infty$.
 Let $\{({\tilde Z}_{n1},\dots, {\tilde Z}_{nn})\}$ be a sequence
of random vectors with independent non-negative coordinates satisfying condition (ii) of Theorem 1.
We have as $n\to \infty$
\begin{equation}\label{Lema3-1}
\PP\bigl(n^{1/(1+\alpha)}/\omega_n <\max_{1\le i\le n}{\tilde Z}_{ni}\le n^{1/(1+\alpha)}\, \omega_n\bigr) \to 1.
\end{equation}

Let $L_n(t)=\sum_{i=1}^n{\tilde Z}_{ni}{\mathbb I}_{\{t<{\tilde Z}_{ni}\le n^{1/(1+\alpha)}\,\omega_n\}}$.
There exists an integer $n_1\ge n_0$ depending on $\alpha, c_1,c_2$ and the sequence $\{\omega_n\}$ such that,
for $n>n_1$ and $t\in (c_0,n^{1/(1+\alpha)})$, we have
\begin{equation}\label{Lema3-2a}
c_1/2\le \frac{\alpha}{1+\alpha}\frac{t^{\alpha}}{n}\, \E L_n(t)\le c_2,
\end{equation}
For $1<\tau<1+\alpha$ there exists an integer $n_2\ge n_0$ and number $c^*>0$ both
depending on $\alpha, \tau, c_1,c_2$ and the sequence $\{\omega_n\}$ such that,
for $n>n_2$ and $t\in (c_0,n^{1/(1+\alpha)})$, we have
\begin{equation}\label{Lema3-2b}
\PP\bigl( \bigl|L_n(t)-\E L_n(t)\bigr| > \gamma \E L_n(t)\bigr)
\le
c^*\gamma^{-\tau}n^{1-\tau}t^{(\tau-1)(\alpha+1)}.
\end{equation}
\end{lem}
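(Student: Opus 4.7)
The three claims of Lemma~\ref{L3} can be handled essentially independently; I would treat them in order. For (\ref{Lema3-1}), the point is independence: $\PP(\max_i\tilde Z_{ni}\le x)=\prod_i\PP(\tilde Z_{ni}\le x)$. Taking $x=n^{1/(1+\alpha)}\omega_n$ in the upper bound of (\ref{Y1}) makes each factor at least $1-c_2\omega_n^{-(1+\alpha)}/n$, whose $n$th power tends to $1$; taking $x=n^{1/(1+\alpha)}/\omega_n$ in the lower bound makes each factor at most $1-c_1\omega_n^{1+\alpha}/n$, whose $n$th power tends to $0$.

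For (\ref{Lema3-2a}), put $T_n:=n^{1/(1+\alpha)}\omega_n$ and apply the tail-integration identity
\begin{equation*}
\E\bigl[\tilde Z_{ni}\,\mathbb I_{\{t<\tilde Z_{ni}\le T_n\}}\bigr]
=t\,\PP(\tilde Z_{ni}>t)-T_n\,\PP(\tilde Z_{ni}>T_n)+\int_t^{T_n}\PP(\tilde Z_{ni}>s)\,ds.
\end{equation*}
The two-sided bound (\ref{Y1}) shows that the right-hand side lies between $(c_1/\alpha)(t^{-\alpha}-T_n^{-\alpha})$ and $(c_2/\alpha)(t^{-\alpha}-T_n^{-\alpha})$ up to lower-order terms. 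Since $T_n^{-\alpha}=\omega_n^{-\alpha}n^{-\alpha/(1+\alpha)}=o(t^{-\alpha})$ uniformly for $t\le n^{1/(1+\alpha)}$, summing over $i$ and multiplying by $\alpha t^\alpha/((1+\alpha)n)$ yields (\ref{Lema3-2a}) for all sufficiently large $n$, the constant $c_1/2$ on the left absorbing the vanishing correction.

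For (\ref{Lema3-2b}), since $1<\tau<1+\alpha<2$, I apply the von Bahr--Esseen moment inequality to the independent centred summands $X_{ni}-\E X_{ni}$ with $X_{ni}:=\tilde Z_{ni}\,\mathbb I_{\{t<\tilde Z_{ni}\le T_n\}}$:
\begin{equation*}
\E|L_n(t)-\E L_n(t)|^\tau\le 2\sum_{i=1}^n\E|X_{ni}-\E X_{ni}|^\tau\le 2^{\tau+1}\sum_{i=1}^n\E X_{ni}^\tau.
\end{equation*}
The $\tau$-th moment analogue of the tail-integration identity together with the upper bound in (\ref{Y1}) gives
\begin{equation*}
\E X_{ni}^\tau\le t^\tau\PP(\tilde Z_{ni}>t)+\tau\int_t^{T_n}s^{\tau-1}\PP(\tilde Z_{ni}>s)\,ds\le\frac{c_2(1+\alpha)}{1+\alpha-\tau}\,t^{\tau-1-\alpha},
\end{equation*}
the integral converging precisely because $\tau<1+\alpha$. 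Combining this with the lower bound $\E L_n(t)\ge (c_1/2)(1+\alpha)\alpha^{-1}nt^{-\alpha}$ from (\ref{Lema3-2a}) and using Markov's inequality,
\begin{equation*}
\PP\bigl(|L_n(t)-\E L_n(t)|>\gamma\E L_n(t)\bigr)\le\frac{\E|L_n(t)-\E L_n(t)|^\tau}{(\gamma\E L_n(t))^\tau}\le c^*\gamma^{-\tau}n^{1-\tau}t^{(\tau-1)(1+\alpha)},
\end{equation*}
where the exponent of $t$ collects as $(\tau-1-\alpha)-\tau(-\alpha)=(\tau-1)(1+\alpha)$.

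The only genuine bookkeeping is controlling the boundary contributions at level $T_n$ uniformly in $t\in(c_0,n^{1/(1+\alpha)})$: in part (ii) the term $T_n^{-\alpha}$ must be shown $o(t^{-\alpha})$ to recover a matching lower bound, while in part (iii) the truncation is what keeps the $\tau$-th moment integral finite (since $\tau<1+\alpha$). Beyond this, the argument is a routine synthesis of independence, tail integration, and the von Bahr--Esseen moment inequality.
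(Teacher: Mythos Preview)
Your proposal is correct and follows essentially the same route as the paper: independence and the tail bounds~(\ref{Y1}) for~(\ref{Lema3-1}), the tail-integration identity (the paper writes it in the equivalent form $a_i^{(\tau)}(t)=t^\tau\PP(t<\tilde Z_{ni}\le T_*)+\tau\int_t^{T_*}x^{\tau-1}\PP(x<\tilde Z_{ni}\le T_*)\,dx$) for the two-sided moment bound~(\ref{Lema3-2a}), and Markov's inequality with exponent~$\tau$ combined with the von~Bahr--Esseen-type bound $\E|\sum T_i|^\tau\le C\sum\E|T_i|^\tau$ for~(\ref{Lema3-2b}). Your bookkeeping of the boundary term at~$T_n$ and the exponent arithmetic $(\tau-1-\alpha)+\tau\alpha=(\tau-1)(1+\alpha)$ match the paper's computation.
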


\begin{proof}[Proof of Lemma \ref{L3}] The proof is routine. We include it for the sake of completeness.
Denote for short
$t_*=n^{1/(1+\alpha)}/\omega_n$ and $T_*=n^{1/(1+\alpha)}\,\omega_n$.

Let us prove (\ref{Lema3-1}).  Write
$p_{n}^*(t):=\PP(\max_{1\le i\le n}{\tilde Z}_{ni}\le t)$.
It follows from (\ref{Y1})  as $n\to \infty$
\begin{eqnarray}\label{e-1}
&&
p_n^*(t_*)
=\prod_i\PP({\tilde Z}_{ni}\le t_*)\le \bigl(1-c_1/t_*^{1+\alpha}\bigr)^n
\le \exp\{-c_1n/t_*^{1+\alpha}\}=o(1),
\\
\label{e-2}
&&
p_n^*( T_*)
=
\prod_i\PP({\tilde Z}_{ni}\le T_*)
\ge \bigl(1-c_2/T_*^{1+\alpha}\bigr)^n
\ge \exp\{-c_2n/(T_*^{1+\alpha}-c_2)\}=1-o(1).
\end{eqnarray}
In (\ref{e-1}) we apply $1-x\le e^{-x}$ to $x=c_1/t_*^{1+\alpha}$.
In (\ref{e-2}) we apply $1-y\ge e^{-y/(1-y)}$ to $y=c_2/T_*^{1+\alpha}<1$.

Let us prove (\ref{Lema3-2a}-\ref{Lema3-2b}). Given $1\le \tau< 1+\alpha$ and $n$, write
$a^{(\tau)}_i(t)=\E {\tilde Z}_{ni}^{\tau}{\mathbb I}_{\{t<{\tilde Z}_{ni}\le T_*\}}$.
It follows from  (\ref{Y1}) and the  identity
\begin{displaymath}
a^{(\tau)}_i(t)=t^{\tau}\PP(t<{\tilde Z}_{ni}\le T_*)+\tau\int_{t}^{T_*}x^{\tau-1}\PP(x<{\tilde Z}_{ni}\le T_*)dx
\end{displaymath}
that, for sufficiently large $n$ and  $t\in (c_0,n^{1/(1+\alpha)})$,
\begin{equation}\label{a1/a2(2)}
c_1/2 \
\le
\ a^{(\tau)}_i(t)\ t^{1+\alpha-\tau} \ \frac{1+\alpha-\tau}{1+\alpha} \
\le
\ c_2.
\end{equation}
Note that the right hand side inequality holds for $n>n_0$, while the left hand side inequality holds
for $n>n_0'$, where $n_0'=n_0'(\alpha,\tau,c_1,c_2,\{\omega_n\})\ge n_0$.

An application of (\ref{a1/a2(2)}) to $\E L_n(t)=\sum_{i=1}^na^{(1)}_i(t)$ shows (\ref{Lema3-2a}).

Let us show (\ref{Lema3-2b}).
Denote
$T_i={\tilde Z}_{ni}{\mathbb I}_{\{t<{\tilde Z}_{ni}\le T^*\}}-\E {\tilde Z}_{ni}{\mathbb I}_{\{t<{\tilde Z}_{ni}\le T^*\}}$.
Write, for short, $b:=\gamma\E L_n(t)$.
By Chebyshev's  inequality,
\begin{equation}
p_n(t):=\PP\bigl(  \bigl|\sum_{i=1}^nT_i\bigr|\ge b\bigr)
\le
b^{-\tau}\E \bigl|\sum_{i=1}^n T_i\bigr|^{\tau}.
\end{equation}
Invoking the inequalities
$\E|\sum T_i|^{\tau}\le \sum\E|T_i|^{\tau}$ and $\E|T_i|^{\tau}\le 2a^{(\tau)}_i(t)$, $1\le \tau \le 2$,
we obtain
\begin{equation}\label{a1/a2}
p_n(t)\le 2b^{-\tau}\sum_{i=1}^na^{(\tau)}_i(t).
\end{equation}
It follows from (\ref{a1/a2(2)}) that $\sum_{i=1}^na^{(\tau)}_i(t)\le c_2\frac{1+\alpha}{1+\alpha-\tau}\frac{n}{t^{1+\alpha-\tau}}$.
Substitution of this inequality and of (\ref{Lema3-2a}) in (\ref{a1/a2}) gives
\begin{displaymath}
p_n(t)\le  \frac{8}{1+\alpha-\tau}\frac{c_2}{c_1^\tau} \frac{1}{\gamma^{\tau}} \frac{t^{(\tau-1)(\alpha+1)}}{n^{\tau-1}}
\end{displaymath}
thus proving (\ref{Lema3-2b}).
\end{proof}

\begin{lem}\label{L5}
Assume that conditions of Theorem 1 are satisfied. Then (\ref{ro-11}) holds.
\linebreak Let $V_n^0=\{v_i:{\tilde Z}_{ni}>n^{1/(1+\alpha)}/l_2^{\alpha}(n)\}\subset V_n$. We have
as $n\to \infty$
\begin{eqnarray}
\label{L5-C1}
&&
\PP\bigl(V_n^0\subset C_1(G_n)\bigr)\to 1,
\\
\label{L5-V0}
&&
\PP\bigl(|V_n^0|\ge 2c_2(l_2(n))^{\alpha(1+\alpha)}\bigr)\to 0.
\end{eqnarray}
Here $|V_n^0|$ denotes the number of elements of the set $V_n^0$ and $l_2(n)$ denotes $\ln(\ln(n)$.
\end{lem}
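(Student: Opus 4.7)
I will address the three claims of Lemma \ref{L5} in order of increasing difficulty: first (\ref{L5-V0}), then (\ref{L5-C1}), and finally (\ref{ro-11}), which is the main obstacle.

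For (\ref{L5-V0}), the random variable $|V_n^0|$ is a sum of $n$ independent Bernoulli indicators. The upper tail bound in condition (ii) of Theorem \ref{T1} applied at $t=n^{1/(1+\alpha)}/l_2^\alpha(n)$ gives $\mu_n:=\E|V_n^0|\le c_2 l_2^{\alpha(1+\alpha)}(n)$, and the matching lower bound gives $\mu_n\ge c_1 l_2^{\alpha(1+\alpha)}(n)\to\infty$. Since $\Var(|V_n^0|)\le \mu_n$ for a Bernoulli sum, Chebyshev's inequality yields
\[
\PP\bigl(|V_n^0|\ge 2c_2 l_2^{\alpha(1+\alpha)}(n)\bigr)\le \PP\bigl(|V_n^0|-\mu_n\ge c_2 l_2^{\alpha(1+\alpha)}(n)\bigr)\le \frac{\mu_n}{(c_2 l_2^{\alpha(1+\alpha)}(n))^2}\to 0.
\]

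For (\ref{L5-C1}), I would show that $V_n^0$ induces a clique in $G_n$ with probability tending to $1$. For any $u,v\in V_n^0$, both $|S_n(u)|$ and $|S_n(v)|$ exceed $(n^{1/(1+\alpha)}/l_2^\alpha(n))\sqrt{m_n/n}$, so (\ref{Lema1-4}) gives
\[
\PP(u\not\sim v\mid u,v\in V_n^0)\le\exp\Bigl(-\tfrac{n^{(1-\alpha)/(1+\alpha)}}{2l_2^{2\alpha}(n)}\Bigr),
\]
super-polynomially small. A union bound over the $\le\binom{n}{2}$ pairs concludes, and combining with (\ref{ro-11}) identifies the component containing the clique with $C_1$.

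For (\ref{ro-11}), the main obstacle, my plan is a multi-scale BFS from $V_n^0$ outward. Introduce nested thresholds $t_n^{(0)}=n^{1/(1+\alpha)}/l_2^\alpha(n)>t_n^{(1)}>\cdots>t_n^{(K)}=c_0$ with $t_n^{(k)}\sim n^{\alpha^k/(1+\alpha)}$, so reaching $O(1)$ requires $K=O(\log\log n)$ layers; set $V_n^{(k)}=\{v_i:\tilde Z_{ni}>t_n^{(k)}\}$. The same Chebyshev argument as in (\ref{L5-V0}) gives $|V_n^{(k)}|\sim n/(t_n^{(k)})^{1+\alpha}$ with high probability, in particular $|V_n^{(K)}|\ge \rho' n$. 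Using Lemma \ref{L3}'s estimate $\sum_{u\in V_n^{(k-1)}}|S_n(u)|\sim (n/(t_n^{(k-1)})^\alpha)\sqrt{m_n/n}$ together with the near-disjointness of these sets (Lemma \ref{L2}), one verifies that for each $v\in V_n^{(k)}\setminus V_n^{(k-1)}$ the direct-adjacency probability to some vertex of $V_n^{(k-1)}$ is at least $1-\exp(-c\,t_n^{(k)}/(t_n^{(k-1)})^\alpha)=1-e^{-c}$, a constant independent of $k$ (the exponent telescopes by the geometric-in-exponent choice of thresholds). Chaining these layer-to-layer connections places a positive fraction of $V_n^{(K)}$ in the component of $V_n^0$, giving $|C_1|\ge\rho n$.

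The principal technical difficulty is to preserve a constant $\rho\in(0,1)$: a naive product of survival fractions over the $K=O(\log\log n)$ layers would introduce a polylogarithmic loss. One must exploit either approximate independence of the events ``$v$ connects to $V_n^{(k-1)}$'' across $v$ (via conditioning on $\{|S_n(v_i)|\}$ and a second-moment argument) or couple the exploration to an inhomogeneous random graph satisfying the standard supercritical-kernel criterion, for which $|C_1|/n$ concentrates at a strictly positive limit.
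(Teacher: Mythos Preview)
Your argument for (\ref{L5-V0}) is fine and essentially matches the paper, which uses an exponential tail bound where you use Chebyshev.

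For (\ref{ro-11}) the paper does something entirely different and much shorter than your multi-scale BFS. It constructs a discrete random variable $\tilde Y_n$, supported on a geometric grid of points in $[c_0,t_{0n}]$, that is stochastically dominated by every $\tilde Z_{ni}^0:=\tilde Z_{ni}\mathbb I\{\tilde Z_{ni}\le t_{0n}\}$; this yields a coupled i.i.d.\ random intersection subgraph $\tilde G_n^0\subset G_n^0\subset G_n$ with $\E\tilde Y_n^2\to\infty$, and then the giant-component result for i.i.d.\ random intersection graphs (\cite{Bloznelis2008/1}) is invoked as a black box to get $|C_1(\tilde G_n^0)|>\rho n$ whp. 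No layer-by-layer bookkeeping is needed. Your route is more self-contained but, as you yourself note, the product of survival fractions over $K=O(\log\log n)$ layers is the real obstruction, and neither of your two suggested cures is actually carried out; the second one (couple to a supercritical inhomogeneous model) is essentially what the paper does via the i.i.d.\ intersection model.

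For (\ref{L5-C1}) there is a genuine gap in your argument as written. Showing that $V_n^0$ is whp a clique and that $|C_1|>\rho n$ does \emph{not} by itself place $V_n^0$ inside $C_1$: the clique could sit in a smaller component. Your BFS would close this gap if it showed that the component \emph{containing} $V_n^0$ has $\ge\rho n$ vertices, but then you still need uniqueness of the linear-size component (or $\rho>1/2$), which you do not address. The paper avoids this by first establishing the giant of the truncated graph $G_n^0$ (edges incident to $V_n^0$ deleted) via the coupling above, then showing that $D:=\bigcup_{v\in C_1(G_n^0)}S_n^0(v)$ has size of order $\sqrt{mn}$ whp, and finally that every $v\in V_n^0$ satisfies $S_n(v)\cap D\ne\emptyset$ whp (a single hypergeometric bound per vertex plus a union bound over $|V_n^0|\le k^*$). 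This attaches $V_n^0$ directly to the already-identified giant and sidesteps both the clique step and the uniqueness issue.
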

Observe that (\ref{Lema3-1}) implies that every maximal vertex of $G_n$ belongs whp to $V_n^0$. Therefore, (\ref{L5-C1}) combined with
(\ref{Lema3-1}) imply (\ref{Remark-1}).

\begin{proof}[Proof of Lemma \ref{L5}]
Let us prove (\ref{L5-V0}). Write $t_{0n}=n^{1/(1+\alpha)}/l_2^{\alpha}(n)$.
We have
\begin{equation}\label{t0-v0}
|V_n^0|=\sum_{i=1}^n{\mathbb I}^0_i,
\qquad
{\mathbb I}^0_i:={\mathbb I}_{\{ {\tilde Z}_{ni}>t_{0n}\}},
\qquad
1\le i\le n.
\end{equation}
For $i=1,\dots, n$, let ${\mathbb I}_i^+$  be independent Bernoulli random variables
with success probability  $\PP({\mathbb I}_i^+=1)=c_2t_{0n}^{-1-\alpha}$.
It follows from (\ref{Y1}), (\ref{t0-v0}) that the random variable $L^+:=\sum_{1\le i\le n}{\mathbb I}_i^+$
is stochastically larger than $|V_n^0|$. Therefore, for every $a>0$ we have
\begin{displaymath}
\PP(|V_n^0|\ge a)\le \PP(L^+\ge a).
\end{displaymath}
Recall that exponential inequalities (\ref{Lema1-3}) remain valid if we replace the hypergeometric random variable $H$ by a
Binomial random variable, see  e.g., \cite{janson2001}. The first inequality of (\ref{Lema1-3}) applied to
Binomial probability $\PP(L_+\ge a)$ with  $a=2\E L^+=2c_2(l_2(n))^{\alpha(1+\alpha)}$ shows (\ref{L5-V0}).

Let us prove (\ref{ro-11}). Let $G_n^0$ be the subgraph of  $G_n$ obtained by deleting the edges incident to vertices from
$V_n^0$. Note that $G_n^0$ is a random intersection graph defined by the random sets $S^0_n(v_i)$, $v_i\in V_n$, such that
$S_n^0(v_i)=S_n(v_i)$ for ${\tilde Z}_{ni}\le t_{0n}$ and $S^0_n(v_i)=\emptyset$, for ${\tilde Z}_{ni}>t_{0n}$.
Denote ${\tilde Z}_{ni}^0:={\tilde Z}_{ni}{\mathbb I}_{\{{\tilde Z}_{ni}\le t_{0n}\}}=|S_n^0(v_i)|\sqrt{n/m}$.
Write
\begin{displaymath}
\varkappa^{1+\alpha}=2c_2/c_1,
\qquad
a_0:=c_0,
\qquad
a_{i+1}=a_i\varkappa,
\qquad
i=0,1,\dots,
\end{displaymath}
and note that $\varkappa^{1+\alpha}\ge 2$. Let ${\tilde Y}_n$ be a discrete random variable with values $0, a_0,a_1,\dots, a_{j_n}$, where
$j_n+1=\max\{i:\, a_i\le t_{0n}\}$,
and with probabilities $\PP({\tilde Y}_n=a_j)={\tilde p}_j$, defined by
\begin{displaymath}
{\tilde p}_j:=c_1a_j^{-1-\alpha}-c_2a_{j+1}^{-1-\alpha}=c_1a_j^{-1-\alpha}/2,
\qquad
j=0,1,\dots, j_n.
\end{displaymath}
Put $\PP({\tilde Y}_{n}=0)=1-{\tilde p}_{0}-{\tilde p}_{1}-\dots-{\tilde p}_{j_n}$.
Note that ${\tilde Y}_n$ is stochastically smaller than ${\tilde Z}_{ni}^0$, for every $1\le i\le n$. Indeed, (\ref{Y1})
implies, for $j=0,1,\dots, j_n$,
\begin{eqnarray}\nonumber
\PP(a_j<{\tilde Z}_{ni}^0\le a_{j+1})
&=&
\PP({\tilde Z}_{ni}>a_j)-\PP({\tilde Z}_{ni}>a_{j+1})
\\
\nonumber
&\ge&
 c_1a_j^{-1-\alpha}-c_2a_{j+1}^{-1-\alpha}
 \\
 \nonumber
 &=&
 {\tilde p}_j=\PP({\tilde Y}_n=a_j).
\end{eqnarray}

Let ${\tilde Y}_{n1},\dots, {\tilde Y}_{nn}$ be independent copies of ${\tilde Y}_n$ defined on the same probability space as
${\tilde Z}_{ni}^0, 1\le i\le n$ and such that almost surely
${\tilde Y}_{ni}\le {\tilde Z}_{ni}^0$, for every $1\le i\le n$ (such coupling is possible because  ${\tilde Y}_{ni}$ is stochastically
smaller than ${\tilde Z}_{ni}^0$). For $1\le i\le n$, let ${\tilde S}_n^0(v_i)$ be a random subset of $S_n^0(v_i)$ of size
$|{\tilde S}_n^0(v_i)|=\lfloor{\tilde Y}_{ni}\sqrt{m/n}\rfloor$ (which is uniformly distributed over the class of  subsets of $S_n^0(v_i)$
of  size $\lfloor{\tilde Y}_{ni}\sqrt{m/n}\rfloor$). Random subsets ${\tilde S}_n^0(v_i)$, $v_i\in V$ are independent
and identically distributed. They
define random intersection graph (denoted)
${\tilde G}_n^0$ which is a subgraph of $G_n^0$. It is easy to see that $\E {\tilde Y}_n^2\to \infty$. Therefore,
using Theorem 1 and Remark 2
of \cite{Bloznelis2008/1}, one can show that there exists $\rho\in (0,1)$ such that the number of vertices $C_1({\tilde G}_n^0)$
of the largest connected component of ${\tilde G}_n^0$ satisfies
\begin{equation}\label{C-G3}
\PP\bigl(|C_1({\tilde G}_n^0)|> \rho\,n\bigr)\to 1.
\end{equation}
The inclusions ${\tilde G}_n^0\subset G_n^0\subset G_n$ imply $|C_1({\tilde G}_n^0)|\le |C_1(G_n^0)|\le |C_1(G_n)|$ and,
by (\ref{C-G3}),
we obtain
\begin{equation}\label{C-G31}
\PP\bigl(|C_1(G_n)|> \rho\,n\bigr)
\ge
\PP\bigl(|C_1(G_n^0)|> \rho\,n\bigr)
\ge
\PP\bigl(|C_1({\tilde G}_n^0)|> \rho\,n\bigr)
\to 1.
\end{equation}
Note that (\ref{ro-11}) follows from (\ref{C-G31}).

Let us prove (\ref{L5-C1}).
 Denote $\delta=c_1/(12(1+c_0)^{1+\alpha})>0$. Write
 $t_*=(1+c_0)(2c_2/c_1)^{1/(1+\alpha)}$ and note that for large $n$ we have
$t_*<t_{0n}$. (\ref{Y1}) implies, for $1\le i\le n$,
\begin{equation}\label{DDELTTA}
\PP\bigl(1+c_0<{\tilde Z}_{ni}\le t_*\bigr)\ge \frac{c_1}{(1+c_0)^{1+\alpha}}-\frac{c_2}{t_*^{1+\alpha}}=6\delta.
\end{equation}
We assume  that $n$ is
large so that $\PP({\tilde Z}^0_{ni}>1)\ge 6 \delta$.
Denote
\begin{displaymath}
D=\cup_{v\in C_1(G_n^0)}S_n^0(v),
\qquad
d^*=\lfloor 2\delta \rho\sqrt{m\, n}\rfloor,
\qquad
k^*=\lfloor 2c_2(l_2(n))^{\alpha(1+\alpha)} \rfloor.
\end{displaymath}
Introduce the events
\begin{displaymath}
{\mathbb A}=\{\forall v\in V_n^0: \, S_n(v)\cap D\not=\emptyset\},
\qquad
{\mathbb B}=\{|D|> d^*\},
\qquad
{\mathbb D}=\{|V_n^0|\le k^*\}.
\end{displaymath}
Note that (\ref{L5-C1}) follows from the limit $\PP({\mathbb A})\to 1$, which itself follows from (\ref{L5-V0}) and the limits
\begin{equation}\label{DBA-1}
\PP({\mathbb B})\to 1,
 \qquad
\PP({\mathbb A}\cap {\mathbb B}\cap{\mathbb D})\to 1.
\end{equation}
Therefore, in order to prove (\ref{L5-C1}) it suffices to show (\ref{DBA-1}).

Let us  show the first  limit of (\ref{DBA-1}).
Denote
\begin{displaymath}
A=\sum_{v\in C_1(G_n^0)}|S_n(v)|,
\qquad
B=\sum_{\{v,u\}\subset C_1(G_n^0)}|S_n(v)\cap S_n(u)|.
\end{displaymath}
The obvious inequality $|D|\ge A-B$ combined with the bounds
\begin{equation}\label{AA-A}
\PP(B>\delta\rho \sqrt{mn})=o(1),
\qquad
\PP(A< 4\delta\rho \sqrt{mn})=o(1)
\end{equation}
implies $\PP({\mathbb B})\to 0$.
It remains to prove (\ref{AA-A}). It follows from (\ref{Y1}) that there exists a number $C>0$ (depending only on $\alpha, c_0,c_1,c_2$)
such that
$\E{\tilde Z}_{ni} \le C$ uniformly in $n>n_0$ and $1\le i\le n$. We have
\begin{displaymath}
\E B\le \sum_{1\le i<j\le n}\E|S_n(v_i)\cap S_n(v_j)|=\sum_{1\le i<j\le n}\frac{\E {\tilde Z}_{ni}{\tilde Z}_{nj}}{n}
\le \frac{C^2}{2}n.
\end{displaymath}
The bound $\E B=O(n)$ in combination with condition (i) of Theorem 1 implies the first bound of (\ref{AA-A}).
Let us prove the second bound of (\ref{AA-A}). Write $V_n^*=V_n\setminus V_n^0$.
We call a vertex $v\in V_n^*$  large if  ${\tilde Z}_n(v)>1$. Other vertices of  $V_n^*$ are called small.
 Let $N^*$
denote the number of large vertices in $V_n^*$.
Note that large vertices have  higher probabilities of belonging to $C_1(G_n^0)$ than small ones.
Therefore, the number ${\tilde N}$ of large vertices in $C_1(G_n^0)$ is stochastically larger than the number $N_0$
of large vertices in the simple random sample of size $|C_1(G_n^0)|$ drawn without replacement and with equal
probabilities from the set
$V_n^*$. The obvious inequality $A\ge {\tilde N}\sqrt{m/n}$ implies, for $s\ge 0$,
\begin{equation}\label{A-N-N-1}
\PP(A>s)\ge \PP({\tilde N}>s\sqrt{n/m})\ge \PP(N_0>s\sqrt{n/m}).
\end{equation}
We shall show that, for $s_n=4\delta\rho n$,
\begin{equation}\label{A-N-N-2}
\PP(N_0>s_n)\to 1.
\end{equation}
Introduce the events ${\mathbb H}=\{|C_1(G_n^0)|> \rho\,n\}$ and ${\mathbb B}^*=\{N^*\ge 5\delta n\}$ and denote
\begin{equation}\label{PN-1DG}
p(n)=\PP(\{N_0>s_n\}\cap {\mathbb D}\cap{\mathbb B}^*\cap{\mathbb H}).
\end{equation}
 By the total probability formula,
\begin{equation}\label{A-N-N-3}
p(n)=\sum_{h>\rho n}\sum_{b>5\delta n}\sum_{k\le k^*}p_{h,b,k}(n)\, \PP\bigl(|C_1(G_n^0)|=h,\, N^*=b,\, |V_n^*|=n-k\bigr).
\end{equation}
Here $p_{h,b,k}(n)$ denotes the conditional probability of the event $\{N_0>s_n\}$ given
$|C_1(G_n^0)|=h,\, N^*=b,\, |V_n^*|=n-k$. (\ref{Lema1-3}) applies to the hypergeometric probability
$p_{h,b,k}(n)=\PP(H_{h,b,n-k}>s_n)$ and, for large $n$,  shows $p_{h,b,k}(n)\ge 1-n^{-10}$. From (\ref{A-N-N-3}) we obtain
\begin{equation}\label{PN-2DG}
p(n)\ge \PP({\mathbb D}\cap{\mathbb B}^*\cap{\mathbb H})(1-n^{-10}).
\end{equation}
Note that the law of large numbers
combined with (\ref{L5-V0}) shows $\PP({\mathbb B}^*)\to 1$. This limit together with (\ref{C-G31}) and (\ref{L5-V0})
implies $\PP({\mathbb D}\cap{\mathbb B}^*\cap{\mathbb H})\to 1$. The latter limit, (\ref{PN-1DG}) and  (\ref{PN-2DG})
shows (\ref{A-N-N-2}). Finally, (\ref{A-N-N-2}) combined with (\ref{A-N-N-1}) implies the second bound of (\ref{AA-A}),
thus completing the proof of the limit $\PP({\mathbb B})\to 1$.

\smallskip

Let us  show the second  limit of (\ref{DBA-1}). The total probability formula gives
\begin{equation}\label{DBA-2}
\PP({\mathbb A}\cap {\mathbb B}\cap{\mathbb D})=\sum_{k\le k^*}\sum_{d>d^*}\PP_{k,d}({\mathbb A})\PP(|D|=d,\, |V_n^0|=k).
\end{equation}
Here $\PP_{k,d}$ denotes the conditional probability given $|D|=d,\, |V_n^0|=k$.
Let $S^*=\{|S_n(v)|, v\in V_n^0\}$ denote the collection of  sizes of sets $S_n(v)$ of vertices $v\in V_n^0$. Note that for
$|V_n^0|=k$, the collection $S^*=\{s_1,\dots, s_k\}$ is a multiset. We have
\begin{equation}\label{DBA-3}
\PP_{k,d}({\mathbb A})
=
\sum_{\{s_1,\dots, s_k\}}\PP_{k,d}\bigl({\mathbb A}\bigr|S^*=\{s_1,\dots, s_k\}\bigr)
\,
\PP_{k,d}(S^*=\{s_1,\dots, s_k\}).
\end{equation}
Here the sum is taken over all possible values $\{s_1,\dots, s_k\}$ of the multiset $S^*$ of cardinality $k$.
The identity
\begin{displaymath}
\PP_{k,d}\bigl({\mathbb A}\bigr|S^*=\{s_1,\dots, s_k\}\bigr)=\prod_{j=1}^k\PP(H_{s_j,d,m}\ge 1)
\end{displaymath}
combined with (\ref{Lema1-4}) implies, for large $n$, the inequality
$\PP_{k,d}\bigl({\mathbb A}\bigr|S^*=\{s_1,\dots, s_k\}\bigr)\ge 1-n^{-10}$ uniformly in $d,k$ and $s_1,\dots, s_k$
satisfying the inequalities $s_j\ge t_{0n}\sqrt{m/n}$, $d> d^*$, $k\le k^*$.
Now (\ref{DBA-3}) implies the inequality $\PP_{k,d}({\mathbb A})\ge 1-n^{-10}$, for $d>d^*$ and $k\le k^*$. Invoking the
latter inequality in
(\ref{DBA-2}) we obtain
\begin{displaymath}
\PP({\mathbb A}\cap{\mathbb B}\cap{\mathbb D})\ge (1-n^{-10})\PP( {\mathbb B}\cap{\mathbb D})=1-o(1).
\end{displaymath}
In the last step we used (\ref{L5-V0}) and the first bound of (\ref{DBA-1}). The proof of (\ref{DBA-1}) is complete.

\end{proof}

\begin{proof}[Proof of Theorem \ref{T1}] In the proof we use the approach developed in \cite{Reittu-Norros2002}, \cite{Reittu-Norros2004}, \cite{Reittu-Norros2005}.

Before the proof we introduce some notation.
Denote
\begin{eqnarray}\label{tk-1}
&&
t_0=n^{1/(1+\alpha)}l_2^{-\alpha}(n),
 \ \ \qquad
t_k=n^{\alpha^k/(1+\alpha)}l_2(n),
\quad
k=1,2, \dots,
\\
\nonumber
&&
k_*=\max\{k:\, n^{\alpha^k/(1+\alpha)}\ge 100+c_0\}.
\end{eqnarray}
We  use the following simple properties of the sequence $\{t_k\}$.
For $n>9$ we have
\begin{eqnarray}\label{tk-2}
&&
t_0 \, t_1/n=l_2^{1-\alpha}(n),
\qquad
t_k\,
t_{k-1}^{-\alpha}=l_2^{1-\alpha}(n),
\qquad
k=2,3,\dots,
\\
\label{k*}
&&
100\, l_2(n)< t_{k_*} <(100+c_0)^{1/\alpha}\, l_2(n),
\qquad
k_*\le l_2(n)/\ln(1/\alpha).
\end{eqnarray}

Given ${\cal U}\subset V_n$ we denote $S({\cal U})=\cup_{v\in {\cal U}}S_n(s)$.
Throughout the proof limits are taken as
$n\to\infty$. Given $n$, write $m=m_n$ and $T=T_n=n^{1/(1+\alpha)}\omega_n$. Fix $1<\tau<1+\alpha$. By $c_1^*, c_2^*,\dots$ we denote positive constants that may
depend only on $\alpha, \tau, c_0,c_1,c_2$.

 Let us prove (\ref{Egle}). Fix a maximal vertex $u_n$ of $G_n$.
We have
\begin{displaymath}
\PP\bigl(d(v_1,u_n)> k_*+\varepsilon l_2(n)\, \bigr| \,  d(v_1,u_n)<\infty \bigr)
=
\frac{
\PP\bigl(d(v_1,u_n)> k_*+\varepsilon l_2(n), \,  d(v_1,u_n)<\infty \bigr)
}
{\PP\bigl(d(v_1,u_n)<\infty \bigr)
}.
\end{displaymath}
In order to prove (\ref{Egle}) we shall show that
\begin{eqnarray}\label{Daumantas}
&&
\PP\bigl(d(v_1,u_n)> k_*+\varepsilon l_2(n), \,  d(v_1,u_n)<\infty \bigr)=o(1),
\\
\label{Kotryna}
&&
\liminf_n \PP\bigl(d(v_1,u_n)<\infty \bigr)>0.
\end{eqnarray}
Let us prove (\ref{Kotryna}). Write $C_1=C_1(G_n)$. It follows from (\ref{ro-11})
that $\PP(u_n\in C_1)=1-o(1)$.
Therefore, we have
\begin{equation}\label{T1-C1-1}
\PP(d(v_1,u_n)<\infty)\ge \PP(v_1,u_n\in C_1)=\PP(v_1\in C_1)-o(1).
\end{equation}
Inequalities (\ref{C-G31}) imply $\E|C_1|\ge\rho n (1-o(1))$ and, by symmetry, we obtain
 \begin{displaymath}
 \PP(v_1\in C_1)=n^{-1}\sum_{v\in V}\PP(v\in C_1)=n^{-1}\E|C_1|\ge \rho(1-o(1)).
 \end{displaymath}
This inequality combined with
(\ref{T1-C1-1}) implies (\ref{Kotryna}).

Let us prove (\ref{Daumantas}).
Introduce the   sets
\begin{eqnarray}\nonumber
&& {\cal U}_0=\{u_n\},
\qquad
{\cal U}_k=\{v_j:\ t_k\le {\tilde Z}_{nj}\le T\}, \qquad
k=1,2,\dots, k_*,
\\
\nonumber
&&
{\cal U}_*=\{v_j:\ 1\le {\tilde Z}_{nj}\le t_*\}.
\end{eqnarray}
 Denote $Q_k=\sum_{v\in {\cal U}_k}|S_n(v)|$ and $q_k=\E Q_k$.
 Introduce the events
\begin{eqnarray}\nonumber
&&
{\mathbb A}_0=\{t_0\le {\tilde Z}_{n}(u_n)\le T\},
\\
\nonumber
&&
{\mathbb A}_k=\{q_k/2\le Q_k\le (3/2)q_k \},
\qquad
k=1,2,\dots, k_*.
\\
\nonumber
&&
{\mathbb A}_{*1}=\bigl\{|{\cal U}_*|\ge 5n\delta\bigr\},
\qquad
{\mathbb A}_{*2}=\bigl\{|{\cal U}_{k_*}|\le n/l_2(n)\bigr\},
\end{eqnarray}
Here $\delta>0$ is defined in (\ref{DDELTTA}) above.
Denote ${\tilde {\mathbb A}}=\Bigl(\cap_{k=0}^{k_*}{\mathbb A}_k\Bigr)\cap{\mathbb A}_{*1}\cap{\mathbb A}_{*2}$. Let us show that
\begin{equation}\label{A-k*}
\PP({\tilde {\mathbb A}})\to 1.
\end{equation}
(\ref{A-k*}) follows from the limits
\begin{equation}\label{Limits}
\PP({\mathbb A}_{*i})\to 1,
\quad
i=1,2,
\qquad
\PP({\mathbb A}_0)\to 1,
\qquad
\PP\bigl(\cap_{1\le k\le k_*}{\mathbb A}_k\bigr)\to 1.
\end{equation}
 An application of Chebyshev's inequality to
the binomial random variables $|{\cal U}_{k_*}|$  and $|{\cal U}_*|$ gives the first limit of  (\ref{Limits}).
The second limit of (\ref{Limits}) is shown in (\ref{Lema3-1}). To show the third limit of (\ref{Limits})
we  write
\begin{displaymath}
1-\PP(\cap_{1\le k\le k_*}{\mathbb A}_k)
=
\PP(\cup_{1\le k\le k_*}{\overline {\mathbb A}}_k)
\le
\sum_{1\le k\le k_*}\PP({\overline {\mathbb A}}_k).
\end{displaymath}
Here ${\overline {\mathbb A}}_k$ denotes the event complement to ${\mathbb A}_k$. Combining the bound, which follows from (\ref{Lema3-2b}),
\begin{displaymath}
\PP({\overline {\mathbb A}}_k)\le c_1^*n^{(\alpha^k-1)(\tau-1)}\bigr(l_2(n)\bigr)^{(\alpha+1)(\tau-1)}
\end{displaymath}
and  the bound, see
(\ref{k*}), $k_*=O(l_2(n))$
we obtain $\sum_{1\le k\le k_*}\PP({\overline {\mathbb A}}_k)=o(1)$, thus showing the third limit of (\ref{Limits}). We arrive at
(\ref{A-k*}).

In the remaining part of the proof we shall assume that the event ${\tilde {\mathbb A}}$ holds.
Let $\PPP$, $\EEE$ and ${\tilde G}$ denote the conditional probability, the conditional expectation,
and the conditional random graph
$G_n$  given ${\tilde Z}_{n1},\dots, {\tilde Z}_{nn}$.
Write $V^*=V\setminus{\cal U}_{k_*}$ and  let $G^*$ denote the subgraph of ${\tilde G}$ induced by $V^*$.
Given $v\in V^*$ define $d_*(v)=\min\{d(w,v): w\in {\cal U}_{k_*}\}$.
We shall show that uniformly in ${\tilde Z}_{n1},\dots, {\tilde Z}_{nn}$ satisfying ${\tilde {\mathbb A}}$ and uniformly in
$v\in V^*$, $u\in {\cal U}_{k_*}$
\begin{eqnarray}\label{Dpadek1}
&&
\PPP(d_*(v)>\varepsilon l_2(n), d_*(v)<\infty)=o(1),
\\
\label{Dpadek2}
&&
\PPP(d(u,u_n)\ge k_*)=o(1).
\end{eqnarray}

It follows from (\ref{Dpadek1}) that a vertex $v\in V$ satisfying $d(v,u_n)<\infty$ finds whp a path of length at most $l_2(n)$
to a vertex $u \in{\cal U}_{k_*}$. (\ref{Dpadek2}) then applies to $u$ and together with (\ref{Dpadek1}) imply
\begin{equation}\label{DPadek3}
\PPP\bigl(d(v_1,u_n)> k_*+\varepsilon l_2(n), \,  d(v_1,u_n)<\infty \bigr)=o(1).
\end{equation}

The  bound (\ref{DPadek3}) combined with (\ref{A-k*}) shows (\ref{Daumantas}).
It remains to prove (\ref{Dpadek1}, \ref{Dpadek2}).

{\it Proof of (\ref{Dpadek1}).} For simplicity of notation we put $\varepsilon=1$. Given $v\in V^*$ denote
$L_*=\{v'\in V^*:\, d^*(v,v')\le l_2(n)\}$. Here $d^*$ denotes
the distance between vertices of the graph $G^*$.
Introduce the event ${\mathbb B}_*=\{|S(L_*)|\ge \delta\, l_2(n) \sqrt{m/n}\}$.
 The event
\begin{eqnarray}\nonumber
\{d_*(v)>l_2(n),\, d_*(v)<\infty\}&\subset& \{S(L_*)\cap S({\cal U}_{k_*})=\emptyset,\, |L_*|> l_2(n)\}
\\
\nonumber
&\subset&
\bigl(\{S(L_*)\cap S({\cal U}_{k_*})
=
\emptyset\}\cap {\mathbb B}_*\bigr)\cup \bigl({\overline {\mathbb B}}_*\cap \{|L_*|> l_2(n)\}\bigr).
\end{eqnarray}
Here ${\overline {\mathbb B}}_{*}$ denote the event complement to  ${\mathbb B}_{*}$. We have
\begin{equation}
\PPP\bigl(d_*(v)>l_2(n),\, d_*(v)<\infty\bigr)\le p'+p'',
\end{equation}
where
\begin{displaymath}
p'= \PPP\bigl(  \{ S(L_*)\cap S({\cal U}_{k_*})=\emptyset\}\cap {\mathbb B}_{*}),
\qquad
p''=\PPP\bigl( {\overline {\mathbb B}}_{*}\cap\{|L_*|>l_2(n)\}).
\end{displaymath}
We shall show that
\begin{equation}\label{p-k-1/2}
p'=o(1),
\qquad
 p''=o(1)
 \qquad
{ \text{as}}
\qquad
n\to \infty.
 \end{equation}
 Let us prove the first bound. (\ref{Lema3-2a}) and (\ref{k*}) imply $q_{k_*}>4c^*_2\sqrt{mn}/(l_2(n))^{\alpha}$.
 Invoking the inequality $Q_{k_*}\ge q_{k_*}/2$ and the inequality, which follows from Lemma 2,
  $\PPP(|S({\cal U}_{k_*})|\ge Q_{k_*}/2)=1-o(1)$
  we obtain the bound $1-\PPP({\mathbb B}')=o(1)$ for the event ${\mathbb B}'=\{|S({\cal U}_{k_*})|>c^*_2\sqrt{mn}/(l_2(n))^\alpha\}$.
 Therefore, we have
\begin{eqnarray}\nonumber
 p'&=& \PPP( \{ S(L_*)\cap S({\cal U}_{k_*})=\emptyset\}\cap {\mathbb B}'\cap {\mathbb B}_{*})+o(1)
\\
\nonumber
&\le&
\PPP( S(L_*)\cap S({\cal U}_{k_*})=\emptyset\bigr|{\mathbb B}',{\mathbb B}_{*})+o(1)
\\
\nonumber
&=&
o(1).
\end{eqnarray}
In the last step we applied (\ref{Lema1-4}) to the
random variable $H=\bigl|S(L_*)\cap S({\cal U}_{k_*})\bigr|$ conditionally, given $|S(L_*)|$ and $|S({\cal U}_{k_*})|$.

Let us show the second bound of (\ref{p-k-1/2}). Denote $k'=\lfloor l_2(n) \rfloor$.
 Let $\{v'_1,v'_2,\dots, v'_{n'}\}$ be an enumeration of elements of
$V^*$. We call $v'_i$ smaller than $v'_j$
whenever $i<j$. We call $v'\in V^*$ large if ${\tilde Z}_n(v')\ge 1$.
 Paint elements of $V^*$ white. Given $v\in V^*$ we
construct the
'breath first search' tree $T_v$ in $G^*$ with the root $v$ as follows.
Paint vertex $v$  black and write $\tau_0=v$. White vertices are checked in increasing order and those found adjacent to
$\tau_0$ are
painted black. Denote them $\tau_1<\tau_2<\cdots< \tau_{j_1}$. After all neighbours of $\tau_0$ have been found the vertex
$\tau_0$ is
called saturated. Then proceed recursively: take the first available black unsaturated vertex, say $\tau_i$ (here $i=\min\{j:\, \tau_j $ is black and unsaturated $\}$),
and find its neighbours among remaining white
vertices. Do this by checking white vertices in increasing order. After all white neighbours of $\tau_i$ have been found the vertex
$\tau_i$ is
called saturated, the neighbours are denoted $\tau_{j_{i-1}+1}<\tau_{j_{i-1}+2}<\cdots<\tau_{j_i}$ and painted black. We call
$\tau_i$ the parent
vertex of its children $\tau_{j_{i-1}+1},\dots, \tau_{j_i}$.
In this way we obtain the list $L=\{\tau_0,\tau_1,\dots\}$ of
vertices of the tree $T_v$. Denote $L_r=\{\tau_0,\dots, \tau_r\}$. Let ${\tilde N}$ denote the number of large vertices in the set $L_{k'}$.
We say that (player) $v$ receives a yellow card at step $r\ge 1$ if vertex $\tau_r$ is large and
$|S(L_{r-1})\cap S_n(\tau_{r})|\ge 2^{-1}|S_n(\tau_{r})|$.
The event that $v$ receives the first yellow card at step $r$
is denoted ${B}_r$. On the event ${\mathbb H}:=\bigl(\cap_{i=1}^{k'}{\overline {B}}_i\bigr)\cap\{{\tilde N}\ge 4\delta k'\}$ we have
\begin{equation}\label{see-above}
|S(L_{k'})|\ge  2^{-1}{\tilde N}\sqrt{m/n}\ge \delta \, l_2(n)\sqrt{m/n}.
\end{equation}
Note that the inequality $|L_*|> l_2(n)$ implies $|L|\ge k'+1$.
 Therefore, we have
 \begin{displaymath}
p''
 \le
 \PPP({\overline {\mathbb B}}_{*}\cap\{|L|>k'+1\}).
\end{displaymath}
Furthermore, for $|L|\ge k'+1$,
the inclusion
 $L_{k'}\subset L_*$ implies $|S(L_{k'})|\le |S(L^*)|$ and  in view of (\ref{see-above}) we conclude that events ${\mathbb H}$ and
${\overline {\mathbb B}}_{*}\cap\{|L|>k'+1\}$  do not intersect. We have
\begin{displaymath}
\PPP({\overline {\mathbb B}}_{*}\cap\{|L|>k'+1\})=\PPP({\overline {\mathbb B}}_{*}\cap\{|L|>k'+1\}\cap{\overline {\mathbb H}})
\le
p^*_1+p^*_2,
\end{displaymath}
where
\begin{displaymath}
p^*_1:=\PPP(\{|L|>k'+1\}\cap \{{\tilde N}< 4\delta k'\}),
\qquad
p^*_2:=\PPP(\cup_{r=1}^{k'}{\mathbb B}_r).
 \end{displaymath}
In order to prove the bound $p''=o(1)$ we shall show that $p^*_i=o(1)$, $i=1,2$.

Write
\begin{equation}\label{DKazuPd}
p^*_1=\PPP(|L|>k'+1){\tilde p},
\qquad
{\tilde p}:=\PPP({\tilde N}< 4\delta k'\,\bigl| \,|L|>k'+1).
\end{equation}
Since large vertices have higher probabilities to join the list $L$ than the other vertices we conclude that
the random variable ${\tilde N}$  is stochastically larger than the number $N_0$ of large vertices in the simple random
sample of size $k'+1$ drawn without replacement and with equal probabilities from the  set $V^*$.
In particular, we have
\begin{equation}\label{DKazmuPd}
{\tilde p}\le \PPP(N_0<4\delta k').
\end{equation}
 Note that
on the event ${\mathbb A}_{*1}\cap{\mathbb A}_{*2}$ we have $\E N_0\ge 5\delta (k'+1)$ and $\Var N_0=O(k')$.
Therefore, Chebyshev's inequality implies $\PPP(N_0<4\delta k')=O(1/k')=o(1)$.
This bound  combined with (\ref{DKazuPd}) and (\ref{DKazmuPd})
implies the bound $p^*_1=o(1)$.

In order to prove the bound $p^*_2=o(1)$ we
write
$p^*_2\le\sum_{r=1}^{k'}\PPP({\mathbb B}_r)$ and show that
\begin{equation}\label{xxn-10}
\PPP({\mathbb B}_r)\le n^{-10},
\end{equation}
for every $r$ and large $n$.
Before the proof of (\ref{xxn-10}) we introduce some notation.
For $i\ge 1$ denote $W_i=W\setminus S(L_{i-1})$, $S'(\tau_i)=S_n(\tau_i)\setminus S(L_{i-1})$, $m_i=|W_i|$, $s_i=|S_n(\tau_i)|$,
$s'_i=|S'(\tau_i)|$. Put $W_0=W$.
Fix $r\ge 1$.
Let $\tau_{r^*}$ denote the parent vertex of $\tau_r$. Denote $D_r=S(L_{r-1})\cap W_{r^*}$ and $d_r=|D_r|$.
We have $\PPP({\mathbb B}_r\bigr| W_{r^*}, D_r, S'(t_{r^*}), s_r)\le p_*$, where
\begin{equation}\label{P*}
p_*:=\PPP_{r^*}\bigl(|D_r\cap S_n(\tau_r)|\ge 2^{-1}s_r\,\Bigr|\, S_n(\tau_r)\cap S'(\tau_{r^*})\not=\emptyset\bigr).
\end{equation}
Here $\PPP_{r^*}$ denotes the conditional probability $\PPP$ given $W_{r^*}, D_r, S'(t_{r^*}), s_r$.
Note that in (\ref{P*}) values of all random variables are fixed (given), but $S_n(\tau_r)$ which is a random set
uniformly distributed in the class of subsets
of $W_{r^*}$ of given size $s_r$ satisfying  $s_r\ge \sqrt{m/n}$ (because $\tau_r$ is a large vertex).
It follows from (\ref{lydeka2}) that for large $n$ we have
\begin{equation}\label{obuoliai}
p_*\le e^{-s_r/8}(1+16m_{r^*}/s_r^2)\le e^{-8^{-1}\sqrt{m/n}}(1+16n)<n^{-10}.
\end{equation}
In the last step we applied condition (i) of Theorem 1.
(\ref{obuoliai}) implies (\ref{xxn-10}) thus completing the proof of (\ref{p-k-1/2}).
We arrive to (\ref{Dpadek1}).

{\it Proof of (\ref{Dpadek2}).}
Given $u_0'\in {\cal U}_{k_*}$ finds a neighbour in ${\cal U}_{k_*-1}$, say, $u_1'$ with
probability at least
\begin{displaymath}
\min_{u\in {\cal U}_{k_*}}\PPP\bigl(S_n(u)\cap S({\cal U}_{k_*-1})\not=\emptyset\bigr)=:p_0^*.
\end{displaymath}
 Similarly, given
$u'_j\in {\cal U}_{k_*-j}$ finds a neighbour in ${\cal U}_{k_*-j-1}$, say $u'_{j+1}$, with probability
at least
\begin{displaymath}
\min_{u\in {\cal U}_{k_*-j}}\PPP\bigl(S_n(u)\cap S({\cal U}_{k_*-j-1})\not=\emptyset\bigr)
=:p_j^*,
\end{displaymath}
 and so on.
In this way we may construct
a
 path (namely, $u_0', u_1',u_2',\dots, u'_{k_*}=u_n$) of length at most $k_*$ connecting
 $u_n$ with an
 arbitrary vertex $u_0'$
 from ${\cal U}_{k_*}$.
The probability that such a construction fails is at most
$\sum_{j=0}^{k_*-1}(1-p_j^*)$. In particular, for any given $u\in {\cal U}_{k_*}$, we have
\begin{displaymath}
\PPP(d(u,u_n)> k_*)\le \sum_{j=0}^{k_*-1}(1-p_j^*).
\end{displaymath}
In order to prove (\ref{Dpadek2}) we shall show that, for some $c_4^*>0$ and large $n$,
\begin{equation}\label{D-1}
1-p_j^*
\le
e^{-c_4^*(l_2(n))^{1-\alpha}}+n^{-2},
\qquad
0\le j\le k_*-1.
\end{equation}

Fix $1\le i\le k_*-1$  and $u\in {\cal U}_{i+1}$. On the event ${\tilde {\mathbb A}}$ we have, for large $n$,
\begin{equation}\label{dddd1}
Q_{i}
\ge
\frac{q_{i}}{2}
\ge
\frac{c_1}{4}\frac{1+\alpha}{\alpha}\frac{\sqrt{mn}}{t_i^{\alpha}},
\end{equation}
where the second inequality of (\ref{dddd1}) follows from (\ref{Lema3-2a}). Denote $c_4^*=\frac{c_1}{16}\frac{1+\alpha}{\alpha}$
and introduce the event  ${\mathbb B}=\{|S({\cal U}_{i})|\ge 2c_4^*\sqrt{m\,n}\, t_{i}^{-\alpha}\}$.
It follows from Lemma 2 (applied to $\gamma_1=1/10$ and $\gamma_2=1/2$) that
\begin{equation}\label{JMJ+}
1-n^{-2}\le \PPP(|S({\cal U}_{i})|\ge Q_{i}/2) \le \PPP({\mathbb B}).
\end{equation}
Here in the last step we invoke (\ref{dddd1}).
 Next we apply (\ref{Lema1-4}) to the hypergeometric random variable
$H=|S_n(u)\cap S({\cal U}_{i})|$, where $|S_n(u)|$ and $|S({\cal U}_{i})|$ are given  and satisfy
$|S_n(u)|\ge t_{i+1}\sqrt{m/n}$ and $|S({\cal U}_{i})|\ge 2c_4^*\sqrt{m\,n}\, t_{i}^{-\alpha}$. We obtain
\begin{eqnarray}\nonumber
\PPP\bigl(S_n(u)\cap S({\cal U}_{i})=\emptyset
\,
\bigr| {\mathbb B}\bigr)
&\le&
\exp\bigl\{-c_4^*\frac{t_{i+1}}{t_{i}^{\alpha}}\bigr\}
\\
\label{S-d}
&=&
\exp\{-c_4^*(l_2(n))^{1-\alpha}\}.
\end{eqnarray}
Combining (\ref{JMJ+}) and (\ref{S-d}) we obtain (\ref{D-1}), for $j=k_*-i-1$ satisfying $0\le j\le k_*-2$.
The proof of (\ref{D-1}) for $j=k_*-1$ is similar but simpler.
We arrive to (\ref{Dpadek2}) thus completing the proof of (\ref{Egle}).

\smallskip

Let us prove (\ref{Egle44}). Denote $a_n=(1+\varepsilon/2)(1/\alpha)\ln(\ln(2+n))$ and introduce the events
\begin{displaymath}
{\mathbb D}=\{v_1,v_2\in C_1\},
\qquad
{\mathbb G}=\{d(v_1,v_2)> 2a_n\},
\qquad
 {\mathbb G}_i=\{d(v_i,u_n)> a_n\},
 \qquad
 i=1,2.
 \end{displaymath}
Note that (\ref {Egle44}) is equivalent to the limit $\PP({\mathbb G}|{\mathbb D})=o(1)$. In order to prove (\ref{Egle44})
we shall show that
that there exists $\rho>0$ such that
\begin{eqnarray}\label{Aidas1}
&&
\liminf_n\PP({\mathbb D})>\rho^2,
\\
\label{Aidas2}
&&
\PP({\mathbb G}\cap {\mathbb D})=o(1).
\end{eqnarray}
Let us prove (\ref{Aidas1}). It follows from the identity $|C_1|=\sum_{v\in V}{\mathbb I}_{\{v\in C_1\}}$, by the symmetry, that
\begin{eqnarray}\nonumber
\E|C_1|^2
&&
=\E\sum_{v\in V}{\mathbb I}^2_{\{v\in C_1\}}+\E\sum_{\{u,v\}\in V}{\mathbb I}_{\{u,v\in C_1\}}
\\
\nonumber
&&
=\E|C_1|+n(n-1)\PP({\mathbb D}).
\end{eqnarray}
This identity combined with  $|C_1|\le n$ and the inequality, which follows from (\ref{ro-11}), $\E|C_1|^2\ge n^2\rho^2(1-o(1))$
shows (\ref{Aidas1}).

Let us prove (\ref{Aidas2}). In view of (\ref{Remark-1}) in suffices to show that
$p:=\PP({\mathbb G}\cap {\mathbb D}\cap\{u_n\in C_1\})=o(1)$. We have $p\le p_1+p_2$, where
$p_i=\PP(d(v_i,u_n)>a_n,\, d(v_i,u_n)<\infty)$, $i=1,2$.
Finally, (\ref{Daumantas}) implies $p_i=o(1)$ thus completing the proof of (\ref{Aidas2}).

\end{proof}


\begin{thebibliography}{}


\bibitem{Bloznelis2007/2}
M. Bloznelis, Degree distribution of a typical vertex in a general random intersection graph, Lithuanian Mathematical Journal,
 {\bf 48} (2008), 38--45


\bibitem{Bloznelis2008/1}
M. Bloznelis, Component evolution in general random intersection graphs, Submitted to SIAM journal on Discrete Mathematics
 (2008)


 \bibitem{Riordan}
    B. Bollob\'as, S. Janson and O. Riordan,
    The phase transition in inhomogenious random graphs,
    Random Structures Algorithms
    {\bf 31} (2007),
    3--122

\bibitem
{Chung-Lu2003}
   F. Chung and L. Lu,
   The average distance in a random graph with given expected degrees,
   Internet Mathematics, {\bf 1} (2003), 91--114



\bibitem{dipietro2004}
R. Di Pietro, L.V. Mancini, A. Mei and A. Panconesi and J.
Radhakrishnan,
   How to design connected sensor networks that are provably secure,
   in: Proceedings of the 2nd IEEE International
Conference on Security and Privacy for Emerging Areas in
Communication Networks (SecureComm 2006)



 \bibitem{Deijfen}
    M. Deijfen and W. Kets,
    Random intersection graphs with tunable degree distribution and clustering,
    Discussion paper No. 2007-08, Tilburg University, 2007.



\bibitem{eschenauer2002}
   L. Eschenauer  and V. D. Gligor,
   A key-management scheme for distributed  sensor networks,
   in: Proceedings of the $9$th ACM conference on computer and communications
   security
   (2002),
  41--47

\bibitem{godehardt2003}
   E. Godehardt and J. Jaworski,
   Two Models of Random Intersection Graphs for Classification, in:
   Studies in Classification, Data Analysis and Knowledge Organization,
   Springer,
   Berlin-- Heidelberg-- New York,
   2003,
   67--81


\bibitem
{Hoeffding1963} W. Hoeffding, Probability inequalities for sums of bounded random variables, J. Am. Stat. Assoc.,
{\bf 58} (1963), 13--30


 \bibitem{Hofstad2004}
   R. van der Hofstad, G. Hooghiemstra, and D. Znamenski, Distances in random graphs
   with finite mean and infinite variance degrees, Electron. J. Probab. {\bf 12} (2007),
   703-766


\bibitem
{janson2001}
   S. Janson, T. \L uczak and A. Ruci\'{n}ski,
   Random Graphs,
   Wiley,
   New York,
   2001
\bibitem
{JKS}
J. Jaworski, M. Karo\'nski and D. Stark,
   The degree of a typical vertex in generalized random intersection  graph models,
   Discrete Mathematics
   {\bf 306} (2006), 2152--2165


   \bibitem
{karonski1999}
   M. Karo\'nski, E. R.  Scheinerman and K.B. Singer-Cohen,
   On random intersection graphs: The subgraph problem,
   Combinatorics, Probability and Computing
   {\bf 8} (1999),
   131--159




 \bibitem{Reittu-Norros2002}
    I. Norros and H. Reittu,
    On the effect of very large nodes in Internet graphs,
 In {\it  EEE Global Telecommunications Conference, Globecom'02}, Taipei, Taiwan, 17 - 21 Nov. 2002. Vol. 3. IEEE (2002), 2624 - 2628



 \bibitem{Reittu-Norros2005}
    I. Norros and H. Reittu,
    On a conditionally Poissonian graph process, Adv. Applied Probability,  {\bf} 38 (2006),
    59-75


\bibitem{Reittu-Norros2004}
H. Reittu and I. Norros,
On the power law random graph model of massive data networks.
Performance Evaluation, {\bf 55} (2004), 2-23


\end{thebibliography}
\end{document}